\numberwithin{equation}{section}
\newcommand{\e}{\epsilon}
\newcommand{\p}{\psi}
\newcommand{\R}{{\mathbb R}}
\newcommand{\N}{{\mathbb{N}}}
\newcommand{\C}{{\mathbb{C}}}
\renewcommand{\qed}{\vrule height7pt width5pt depth0pt}
\newcommand{\Fc}{{\mathcal F}}
\newcommand{\Bc}{{\mathcal B}}
\newcommand{\Dc}{{\mathcal D}}
\newcommand{\Lc}{{\mathcal L}}
\newcommand{\Pc}{{\mathcal P}}
\newcommand{\Mcc}{{\mathcal M}}
\DeclareMathOperator{\Cp}{Cap}
\DeclareMathOperator{\Co}{Co}
\DeclareMathOperator{\supp}{supp}
\DeclareMathOperator{\rank}{rank}
\DeclareMathOperator{\Ran}{Ran}
\DeclareMathOperator{\Index}{index}
\DeclareMathOperator{\Ker}{Ker}
\DeclareMathOperator{\codim}{codim}
\newtheorem{theorem}{Theorem}[section]
\newtheorem{lemma}[theorem]{Lemma}
\newtheorem{corollary}[theorem]{Corollary}
\newtheorem*{theorem*}{Theorem}
\theoremstyle{definition}
\theoremstyle{remark}
\newtheorem*{remark*}{Remark}
\newcommand{\abs}[1]{\lvert#1\rvert}
\newcommand{\norm}[1]{\lVert#1\rVert}
\DeclareMathOperator{\Int}{Int}
\DeclareMathOperator{\Ext}{Ext}
\newcommand{\ess}{\text{\tiny{ess}}}
\begin{document}

\title[Bargmann-Toeplitz operators]{On the spectrum of Bargmann-Toeplitz operators with symbols of a variable sign}
\author[Pushnitski]{Alexander  Pushnitski}
\address[A. Pushnitski]{Department of Mathematics,
King's College London, Strand, London WC2R  2LS, U.K.}
\email{alexander.pushnitski@kcl.ac.uk}
\author[Rozenblum]{Grigori Rozenblum}
\address[G. Rozenblum]{1. Department of Mathematics \\
                          Chalmers University of Technology \\
                          2.Department of Mathematics  University of Gothenburg \\
                          Chalmers Tv\"argatan, 3, S-412 96
                           Gothenburg
                          Sweden}
\email{grigori@chalmers.se}

\begin{abstract}
The paper discusses
the spectrum 
of Toeplitz operators in  Bargmann spaces.
Our Toeplitz operators have real symbols with a variable 
sign and a compact support. 
A class of examples is considered where 
the asymptotics of the eigenvalues of such operators can 
be computed.
These examples show that this asymptotics 
depends on the geometry of the supports of the positive
and negative parts of the symbol.
Applications to the perturbed
Landau Hamiltonian are given.
\end{abstract}
\keywords{Bergman spaces, Bargmann spaces,
Toeplitz operators}

\subjclass[2000]{Primary 47B35; Secondary 47B06, 35P20}

\date{}

\maketitle


\section{Introduction}\label{intro}
\subsection{Bargmann-Toeplitz operators}
Let $\Fc^2$ be the Fock space, i.e. the Hilbert space of all entire
functions $f=f(z)$, $z\in\C$ such that
$$
\int_\C\abs{f(z)}^2e^{-\abs{z}^2}dm(z)
\equiv
\norm{f}^2_{\Fc^2}<\infty.
$$
Here and in what follows $dm(z)$ denotes the Lebesgue measure in $\C$.
Let $P_{\Fc^2}$ be the orthogonal projection in 
$L^2(\C,e^{-\abs{z}^2}dm(z))$ onto $\Fc^2$. 
For a bounded compactly supported function $V:\C\to\R$, 
the \emph{Bargmann-Toeplitz operator $T_V$ in $\Fc^2$} is the operator
\begin{equation}
T_V:\Fc^2\to\Fc^2, \quad T_Vf=P_{\Fc^2}Vf,\quad f\in\Fc^2.
\label{tb}
\end{equation}
The function $V$ is called the \emph{symbol} of the operator $T_V$.
The quadratic form of the operator $T_V$ is given by 
\begin{equation}\label{1:BargForm}
(T_V f,f)_{\Fc^2}=
\int_{\C}V(z)\abs{f(z)}^2 e^{-\abs{z}^2} d m(z), 
\quad f\in \Fc^2.
\end{equation}
It is well known that the operator $T_V$ is compact,
see e.g. \cite{Lue1,RaiWar}.
The subject of this paper is the rate of accumulation of the eigenvalues
of $T_V$ to zero for compactly supported symbols $V$ of a variable sign. 

For symbols $V$ with  a moderate decay at infinity, the asymptotics
of the eigenvalues of $T_V$ has been extensively studied in
the literature, see \cite{Raikov1,Raikov2,Sobolev,Tamura}
and also \cite[Chaps. 11 and 12]{Ivrii}.
For this class of symbols, there appears to be very little 
difference between the case of non-negative $V$ and 
the case of $V$ of a variable sign; this concerns both the 
results and the technique required for their 
proof. As it will be clear 
from the discussion below, the situation for compactly 
supported symbols $V$ is radically different.

We denote by  $\lambda_1^+\geq \lambda_2^+\geq\cdots>0$
the sequence of positive eigenvalues of $T_V$
enumerated with multiplicities taken into account.
If $T_V$ has only $k$ positive eigenvalues, we set $\lambda^+_n=0$
for $n\geq k+1$.
Similarly, let $\lambda_1^-\geq \lambda_2^-\geq\cdots>0$
be the sequence of positive eigenvalues
of $-T_V=T_{-V}$, appended by zeros if necessary. 
Finally,
$s_1\geq s_2\geq \cdots>0$ is the sequence of the singular values of $T_V$; 
of course, this is just the re-ordered union of the non-zero elements of the 
sequences $\{\lambda_n^+\}_{n=1}^\infty$ and $\{\lambda_n^-\}_{n=1}^\infty$.

\subsection{The case $V\geq0$}
In this case we have $T_V\geq0$ and so
$\lambda^-_n\equiv0$ and $\lambda_n^+=s_n$.
Let us first briefly discuss the results known for the case
of $V$ with a moderate decay at infinity. 
Suppose that 
\begin{equation}
V(x)=\mathbf v(x/\abs{x})\abs{x}^{-\rho}(1+o(1))
\quad\text{as $\abs{x}\to\infty$,}
\label{power}
\end{equation}
where $\rho>0$ is a constant 
and $\mathbf v\geq0$  is a function on the unit circle which 
satisfies some  regularity assumptions. 
Then one has \cite{Sobolev,Raikov1} $s_n=Cn^{-\rho/2}(1+o(1))$, 
as $n\to\infty$, where the constant $C$ can be explicitly expressed 
in terms  of $\mathbf v$ and $\rho$. 

Now suppose that $V$ is compactly supported. Then the above
result, in combination with  variational considerations, 
says only that $s_n$ decays faster than any power of 
$n$ as $n\to\infty$.
In fact, 
it turns out that the rate of decay of $s_n$ as $n\to\infty$ is
super-exponential.
In \cite{RaiWar,MelRoz} it was proved that, if compactly supported $V\geq0$ 
is separated from zero
on some open set (which is true, for example, 
if $V$ is continuous and not identically zero), then
\begin{equation}
\label{1:RWestimate log}
\log s_n=-n\log n +O(n),\quad  n\to\infty,
\end{equation}
or, using a somewhat informal but more expressive notation,
\begin{equation}
\label{1:RWestimate}
s_n=e^{-n\log n+O(n)}, \quad  n\to\infty.
\end{equation}
More precise asymptotics of $s_n$ is known \cite{FilPush}; we will
say more about this in Section~\ref{sec2.2}.

For future reference we would like to display a corollary of
\eqref{1:RWestimate log},
\eqref{1:RWestimate} which concerns the eigenvalue counting function.
For a self-adjoint operator $A$ and an interval $\Delta\subset\R$,
we denote by $E(\Delta; A)$ the spectral projection of $A$ corresponding to $\Delta$ and
by  $N(\Delta;A)=\rank E(\Delta;A)$ the total number of eigenvalues (counting multiplicities)
of $A$ in $\Delta$.
With this notation, \eqref{1:RWestimate} yields the asymptotic relation
\begin{equation}
N((\lambda,\infty);T_V)
=
\frac{\abs{\log\lambda}}{\log\abs{\log\lambda}}
(1+o(1)), \quad \lambda\to+0,
\quad V\geq0.
\label{a1}
\end{equation}
Note that  \eqref{a1} follows from \eqref{1:RWestimate} but the converse 
statement is false.

\subsection{The case of $V$ of a variable sign}
The main focus of this paper is the case of symbols
$V$ of a variable sign. We denote $V_\pm=(\abs{V}\pm V)/2\geq0$.
If $V$ has a power asymptotics at infinity \eqref{power} 
with $\mathbf v$ of a variable sign, then 
one has \cite{Sobolev,Raikov1} 
$\lambda^\pm_n=C^\pm n^{-\rho/2}(1+o(1))$, 
as $n\to\infty$, where the constants 
$C^\pm$ can be explicitly  expressed in terms of 
$\mathbf{v_\pm}$ and $\rho$. 
Essentially, in this case the operators $T_{V_+}$ and $T_{V_-}$ 
are ``asymptotically orthogonal''. This means that the asymptotics
of $\lambda_n^+$ (resp. $\lambda_n^-$)
is determined by the positive (resp. negative) part of the symbol $V$ only.
See also \cite{BS} for some older results of a similar nature.

Now suppose that the symbol $V$ is compactly supported. 
As we shall see, $T_{V_+}$ and $T_{V_-}$  are no longer 
asymptotically orthogonal in this case and the question of 
 the asymptotics of $T_V$ becomes more complicated. 
Of course, by the estimates $-T_{V_-}\leq T_V\leq T_{V_+}$,  the min-max
principle shows that we still have the super-exponential upper bounds
of the type
\eqref{1:RWestimate}
on both positive and negative eigenvalues of $T_V$:
$$
\lambda_n^\pm\leq e^{-n\log n+O(n)}, \quad n\to\infty.
$$
However, if both $V_+$ and $V_-$ are non-zero, the lower  bounds on the eigenvalues
of $T_V$ become a delicate problem because of the possible cancellations
between the contributions of $V_+$ and $V_-$.
Even the question of whether there are infinitely many non-zero
eigenvalues of $T_V$ in this case is non-trivial.
It was only in 2008 that the following theorem has been established
by D.~Luecking \cite{Lue2}:
\begin{theorem*}[\cite{Lue2}]
If $T_V$ is an operator of a finite rank, then $V\equiv0$.
\end{theorem*}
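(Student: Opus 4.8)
The plan is to convert the finite-rank hypothesis into a moment problem for a symmetric measure. Fix the standard orthonormal basis $e_k(z)=z^k/\sqrt{\pi k!}$ of $\Fc^2$; by \eqref{1:BargForm} the matrix of $T_V$ in this basis is $\langle T_V e_j,e_k\rangle_{\Fc^2}=(\pi\sqrt{j!\,k!})^{-1}g_{jk}$, where
\[
g_{jk}=\int_\C V(u)\,u^j\bar u^k\,e^{-\abs u^2}\,dm(u),\qquad j,k\ge0 .
\]
If $T_V$ has finite rank $N$, then so does the infinite matrix $[g_{jk}]_{j,k\ge0}$ (which differs from the previous one only by nonzero row and column factors), and hence every $(N+1)\times(N+1)$ minor of $[g_{jk}]$ vanishes. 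First I would fix two $(N+1)$-tuples $\mathbf m,\mathbf k$ of nonnegative integers and write $\det[g_{m_i,k_l}]_{i,l=0}^N=0$.

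The heart of the argument is the expansion of this determinant. Since each entry $g_{m_i,k_l}$ is itself an integral over $\C$, multilinearity of the determinant turns $\det[g_{m_i,k_l}]$ into an integral over $\C^{N+1}$ whose integrand carries the Vandermonde-type determinant $\det[\bar u_i^{k_l}]_{i,l=0}^N=\overline{a_{\mathbf k}(u)}$ with $a_{\mathbf k}(u):=\det[u_i^{k_l}]$:
\[
\det[g_{m_i,k_l}]=\int_{\C^{N+1}}\Big(\prod_{i=0}^N V(u_i)\,u_i^{m_i}\,e^{-\abs{u_i}^2}\Big)\,\overline{a_{\mathbf k}(u)}\;dm^{N+1}(u).
\]
Because $\overline{a_{\mathbf k}}$ is antisymmetric in $u_0,\dots,u_N$ while $\prod_i e^{-\abs{u_i}^2}$ and $dm^{N+1}$ are symmetric, one may replace $\prod_i u_i^{m_i}$ by its antisymmetrization, which is a scalar multiple of $a_{\mathbf m}(u)$. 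As $\mathbf m$ and $\mathbf k$ range over strictly increasing tuples the polynomials $a_{\mathbf m}$, $a_{\mathbf k}$ each run through a basis of the space of alternating polynomials in $N+1$ variables, so the vanishing of all $(N+1)$-minors is equivalent to
\[
\int_{\C^{N+1}}\Big(\prod_{i=0}^N V(u_i)\,e^{-\abs{u_i}^2}\Big)\,A(u)\,\overline{B(u)}\;dm^{N+1}(u)=0
\]
for \emph{all} alternating polynomials $A,B$. Writing every alternating polynomial as the Vandermonde $a_\delta(u)=\prod_{i<j}(u_j-u_i)$ times a symmetric polynomial, and putting
\[
d\Omega(u):=\Big(\prod_{i=0}^N V(u_i)\,e^{-\abs{u_i}^2}\Big)\abs{a_\delta(u)}^2\,dm^{N+1}(u),
\]
this says $\int_{\C^{N+1}}s(u)\,\overline{r(u)}\,d\Omega(u)=0$ for all symmetric polynomials $s,r$.

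To finish, note that $\Omega$ is a compactly supported complex measure on $\C^{N+1}$, invariant under permutations of the coordinates, and that the last identity — with $s$ and $r$ taken to be monomials in the elementary symmetric polynomials $e_1,\dots,e_{N+1}$ — says precisely that the push-forward of $\Omega$ under the map $E(u)=(e_1(u),\dots,e_{N+1}(u))$ has all its moments equal to zero. A compactly supported measure with vanishing moments is zero, so $E_*\Omega=0$; since $\Omega$ is permutation-invariant and $E$ realizes the quotient by that action (every symmetric continuous function factors through $E$), this forces $\Omega=0$. Hence $\prod_{i=0}^N V(u_i)\,\abs{a_\delta(u)}^2=0$ for a.e.\ $u\in\C^{N+1}$, and since $a_\delta$ is nonzero off a null set, $\prod_i V(u_i)=0$ a.e.; a Fubini argument then yields $V=0$ a.e., i.e.\ $V\equiv0$.

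The main obstacle is exactly the determinant expansion together with the recognition that the resulting multiple integral is governed by symmetric-function combinatorics: once the alternating/symmetric bookkeeping is set up (the Vandermonde factor, the basis of alternating polynomials, the reduction to $e_1,\dots,e_{N+1}$), the problem collapses to the classical fact that a symmetric compactly supported measure is determined by its moments. The remaining points — that $V\in L^\infty$ with compact support makes all the objects above well defined, uniqueness in the moment problem on compacta (Stone–Weierstrass), and the invariance of $\Omega$ — are routine. It is worth recording that one must let both $\mathbf m$ and $\mathbf k$ vary: restricting, say, to the minors with $\mathbf k=(0,1,\dots,N)$ would give $\int s\,d\Omega=0$ only for \emph{holomorphic} symmetric polynomials $s$, which does not determine $\Omega$.
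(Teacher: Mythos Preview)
The paper does not give its own proof of this statement: it is simply quoted from \cite{Lue2} as background. Your argument is correct and is, in fact, a faithful reconstruction of Luecking's original proof --- the expansion of the vanishing $(N{+}1)$-minors as integrals over $\C^{N+1}$, the Vandermonde/alternating--symmetric factorisation, and the reduction via the elementary symmetric map to a compactly supported moment problem are exactly the steps in \cite{Lue2}. One small point worth making explicit in a write-up: to conclude $E_*\Omega=0$ you use that the products $p(w)\overline{q(w)}$ with $p,q$ polynomials span a dense subalgebra of $C(K)$ on any compact $K\subset\C^{N+1}$ (Stone--Weierstrass), and to pass from $E_*\Omega=0$ back to $\Omega=0$ you use that $\Omega$ is permutation-invariant, so integrating any continuous function against $\Omega$ equals integrating its symmetrisation. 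Both are routine, as you say.
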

Thus, for $V\not\equiv0$ there are always infinitely many non-zero eigenvalues.
The purpose of this paper is to show by means of two simple ``extreme'' 
examples that the 
asymptotic behaviour of the eigenvalues $\lambda_n^\pm$ as $n\to\infty$
is determined by the geometry of the sets $\supp V_+$ and $\supp V_-$. 
In Section~\ref{qualitative} we show that if $\supp V_-$ is ``locked inside''
$\supp V_+$, then the number of negative eigenvalues of $T_V$ 
is finite. We prove a simple
\begin{theorem}\label{2:PositivePeriph}
Suppose that $V$ is
continuous and $K=\supp(V)$ is connected. Suppose also
that there exists a smooth closed simple curve $\Gamma\subset K$ such
that $V(x)\geq\delta>0$ on $\Gamma$ and
$\supp V_-$ lies inside the curve $\Gamma$.
Then $V$ has only finitely many negative eigenvalues.
\end{theorem}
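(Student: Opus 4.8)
The plan is to show that the number of negative eigenvalues of $T_V$ equals the number of negative eigenvalues of a suitable truncation of $T_V$ associated with the region inside $\Gamma$, and that the latter is finite because the positivity of $V$ on $\Gamma$ forces a ``trapping'' inequality. The key analytic tool is the reproducing property of the Fock space: if $f\in\Fc^2$, then on any compact set the values of $f$ are controlled in terms of its $L^2$-norm over a slightly larger region, and conversely the $L^2$-norm of $f$ over the region enclosed by $\Gamma$ is controlled by its $L^2$-norm over a neighborhood of $\Gamma$ together with a contribution from $\supp V_-$. Concretely, let $\Omega$ denote the open region enclosed by the simple closed curve $\Gamma$, so that $\supp V_-\subset\Omega$ by hypothesis. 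The idea is to establish an estimate of the form
\begin{equation}\label{plan:trap}
\int_{\Omega}\abs{f(z)}^2e^{-\abs{z}^2}\,dm(z)
\le
C\int_{U_\Gamma}\abs{f(z)}^2 e^{-\abs{z}^2}\,dm(z)
\end{equation}
for every $f\in\Fc^2$, where $U_\Gamma$ is a thin tubular neighborhood of $\Gamma$ on which $V\ge\delta/2>0$ (such a $U_\Gamma$ exists by continuity of $V$). An inequality of this type is a quantitative unique-continuation / Harnack-type statement for entire functions: a holomorphic function that is small in the weighted $L^2$-norm near the boundary curve $\Gamma$ cannot be large inside $\Omega$, because $\abs{f}^2e^{-\abs{z}^2}$ is logarithmically subharmonic and hence satisfies a sub-mean-value principle, so its integral over $\Omega$ is dominated by boundary data.

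Granting \eqref{plan:trap}, I would argue as follows. For $f\in\Fc^2$,
\begin{align}
(T_Vf,f)_{\Fc^2}
&=\int_{\C}V\abs{f}^2e^{-\abs{z}^2}\,dm
\ge
\int_{U_\Gamma}V\abs{f}^2e^{-\abs{z}^2}\,dm
-\int_{\supp V_-}V_-\abs{f}^2e^{-\abs{z}^2}\,dm\nonumber\\
&\ge
\frac{\delta}{2}\int_{U_\Gamma}\abs{f}^2e^{-\abs{z}^2}\,dm
-\norm{V_-}_\infty\int_{\Omega}\abs{f}^2e^{-\abs{z}^2}\,dm.\nonumber
\end{align}
Using \eqref{plan:trap} to bound the second integral by $C$ times the first, we get $(T_Vf,f)_{\Fc^2}\ge(\delta/2-C\norm{V_-}_\infty)\int_{U_\Gamma}\abs{f}^2e^{-\abs{z}^2}\,dm$. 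If $C\norm{V_-}_\infty<\delta/2$ this is already $\ge0$ and there are no negative eigenvalues; in general, the constant $C$ in \eqref{plan:trap} depends only on $\Gamma$ and the geometry, not on $V_-$, so $C\norm{V_-}_\infty$ need not be small, and one cannot win outright. To handle this I would instead use the min-max principle: the number of negative eigenvalues of $T_V$ equals the maximal dimension of a subspace of $\Fc^2$ on which the quadratic form $(T_Vf,f)$ is negative, and on such a subspace $\int_{U_\Gamma}\abs{f}^2e^{-\abs{z}^2}\,dm$ must be comparable to $\norm{f}^2_{\Fc^2}$ (else the inequality above already forces the form to be nonnegative up to a compact correction). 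But the map $f\mapsto f|_{U_\Gamma}$, followed by the bound coming from subharmonicity which recovers $f$ on all of $\C$ from its values near $\Gamma$ with at most exponential loss, shows that the restriction operator $\Fc^2\to L^2(U_\Gamma,e^{-\abs z^2}dm)$ is injective with closed range and, crucially, that the inverse is bounded on the finite-dimensional pieces only up to a loss growing with $n$; a more careful accounting shows that a subspace on which $\int_{U_\Gamma}\abs f^2 e^{-\abs z^2}dm\gtrsim\norm f^2_{\Fc^2}$ can have dimension at most $O(1)$. The cleanest route is: the operator $f\mapsto P_{\Fc^2}(\mathbf{1}_{U_\Gamma}f)$ has infinitely many eigenvalues tending to zero (it is a Bargmann-Toeplitz operator with a bounded compactly supported symbol), so only finitely many of them exceed $2C\norm{V_-}_\infty/\delta$; on the orthogonal complement of the corresponding finite-dimensional spectral subspace, the displayed inequality gives $(T_Vf,f)\ge0$, hence $T_V$ has at most that many negative eigenvalues.

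The main obstacle is establishing the trapping inequality \eqref{plan:trap} with a constant independent of $f$. The subharmonicity of $\abs{f}^2e^{-\abs z^2}$ alone gives sub-mean-value inequalities at each point but integrating these over $\Omega$ to get domination by an integral over the thin annular neighborhood $U_\Gamma$ requires a genuine maximum-principle or elliptic-estimate argument: one writes $\abs{f}^2e^{-\abs z^2}=e^{2u}$ with $u=\re(\log f)-\abs z^2/2$ subharmonic (away from zeros of $f$), or one works directly with the holomorphic function and a Cauchy-integral representation on $\Omega$ with the contour $\Gamma$, estimating $\sup_{\Omega}\abs{f}^2e^{-\abs z^2}$ by a constant times $\sup_{\Gamma}$, then converting sup-bounds to $L^2$-bounds via the reproducing kernel estimate $\abs{f(z)}e^{-\abs z^2/2}\le C_R(\int_{\abs{w-z}<R}\abs{f}^2e^{-\abs w^2}dm)^{1/2}$. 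Care is needed because $e^{-\abs z^2}$ is not harmonic, so the naive maximum principle applies to $\abs f^2$ (which can be large where $e^{-\abs z^2}$ is small) rather than to the product; the fix is to note $\abs f^2 e^{-\abs z^2}$ is still logarithmically subharmonic, so $\log(\abs f^2 e^{-\abs z^2})$ is subharmonic, and the maximum principle for subharmonic functions on the compact set $\overline\Omega$ gives $\sup_{\overline\Omega}\log(\abs f^2e^{-\abs z^2})=\sup_{\Gamma}\log(\abs f^2e^{-\abs z^2})$, which is exactly what is needed. Once this sup-over-$\Gamma$ estimate is in hand, \eqref{plan:trap} follows by covering $\Gamma$ with finitely many discs inside $U_\Gamma$ and applying the reproducing-kernel $L^2$ bound on each.
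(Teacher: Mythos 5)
There is a genuine gap in the final step. Your trapping inequality \eqref{plan:trap} has a \emph{fixed} constant $C$, and when $C\norm{V_-}_\infty\geq\delta/2$ your displayed inequality only gives $(T_Vf,f)\geq -(C\norm{V_-}_\infty-\delta/2)\int_{U_\Gamma}\abs f^2e^{-\abs z^2}dm$. Passing to the orthogonal complement of the spectral subspace of $T_{\mathbf 1_{U_\Gamma}}$ where its eigenvalues exceed some threshold makes $\int_{U_\Gamma}\abs f^2e^{-\abs z^2}dm$ \emph{small} relative to $\norm f^2_{\Fc^2}$; but that only makes the lower bound \emph{less negative}, it does not make it nonnegative. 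So the claimed conclusion ``$(T_Vf,f)\geq0$ on that complement'' does not follow, and neither does the alternative phrasing (``a subspace on which $\int_{U_\Gamma}\abs f^2\gtrsim\norm f^2_{\Fc^2}$ has bounded dimension'' points in the wrong direction for the same reason). What is actually needed is not a fixed-constant domination of the interior integral by the annular one, but a \emph{compactness} statement: for every $\varepsilon>0$ there is a subspace of finite codimension on which $\int_{\supp V_-}\abs f^2e^{-\abs z^2}dm\leq\varepsilon\int_{U_\Gamma}\abs f^2e^{-\abs z^2}dm$. Since $\supp V_-$ is strictly interior to $\Gamma$, the Cauchy integral formula $f(z)=\int_\Gamma K(z,\zeta)f(\zeta)\,d\ell(\zeta)$ (with $K$ bounded because $\operatorname{dist}(\supp V_-,\Gamma)>0$) gives a \emph{compact} operator from $L^2(\Gamma,d\ell)$ to $L^2(\supp V_-)$, and compactness is what produces the $\varepsilon$ on finite-codimensional subspaces. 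This is the ingredient the paper's proof uses and your argument does not supply.

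A secondary error: $\abs f^2e^{-\abs z^2}$ is \emph{not} log-subharmonic. One has $\Delta\log(\abs f^2e^{-\abs z^2})=2\Delta\log\abs f-4$, which equals $-4<0$ away from zeros of $f$, so the function is locally log-\emph{super}harmonic there; for $f\equiv1$ the maximum of $e^{-\abs z^2}$ on $\overline\Omega$ is at the interior point closest to the origin, not on $\Gamma$. Hence $\sup_{\overline\Omega}\log(\abs f^2e^{-\abs z^2})=\sup_\Gamma\log(\abs f^2e^{-\abs z^2})$ is false. The inequality \eqref{plan:trap} itself can still be rescued: $\abs f^2$ alone is subharmonic, so $\sup_{\overline\Omega}\abs f^2=\sup_\Gamma\abs f^2$, and since $e^{-\abs z^2}$ is bounded above and below on the compact set $\overline\Omega$ one may absorb its variation into the constant $C$; the sub-mean-value inequality on small discs along $\Gamma$ then converts the sup bound into the integral bound over $U_\Gamma$. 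But as explained above, \eqref{plan:trap} with a fixed constant is in any case not enough to finish the proof, so fixing this point alone does not close the gap.
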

Of course, by Luecking's theorem, the number of positive
eigenvalues in this case is infinite.
In Section~\ref{qualitative} we prove Theorem~\ref{2:PositivePeriph} 
and also 
discuss sufficient conditions on $V$ which ensure that $T_V$
has infinitely many negative eigenvalues.

At the other extreme, in Section~\ref{Sect.3} we consider a simple
class of symbols $V$ with the sets $\supp V_+$ and $\supp V_-$ 
``well separated'' and show that in this case there are infinitely 
many positive eigenvalues and infinitely many negative eigenvalues. 
We also establish some asymptotic bounds on the sequences
$\lambda_n^\pm$ in this case, see next subsection.

Finally, in Section~\ref{sect.4} we briefly discuss applications of the above results
 to the perturbed Landau Hamiltonian, i.e.
to the two-dimensional Schr\"odinger operator with a constant homogeneous magnetic field.
This was, in fact, the initial motivation for this work.

\subsection{Asymptotics bounds on the spectrum of $T_V$}
Here we discuss  the results concerning the case of $V$ with 
the supports of $V_+$ and $V_-$ being ``well separated''. 
Let $\Omega_+$ and $\Omega_-$ be two compact domains in $\C$
with Lipschitz boundaries.
We denote by $\Co(\Omega_\pm)$ the closed convex hull of $\Omega_\pm$, and we assume that
 \begin{equation}\label{1:disjoint}
    \Co(\Omega_+)\cap\Co(\Omega_-)=\varnothing.
 \end{equation}
 We consider the symbols $V$ of the form
 \begin{equation}\label{1:symbol V}
    V(z)=v_+(z)\chi_{\Omega_+}(z)-v_-(z)\chi_{\Omega_-}(z),
 \end{equation}
 where $v_\pm$ are bounded non-negative functions 
 such that $\inf_{\Omega_\pm}v_\pm>0$.
 In Section~\ref{Sect.3} we establish our main result:
 \begin{theorem}\label{1:theorem1.1}
 Let $\Omega_+$, $\Omega_-$ be Lipschitz domains in $\C$ which
satisfy \eqref{1:disjoint} and
let $V$ be of the form \eqref{1:symbol V}.
Then for the singular values $s_n$ of $T_V$ the asymptotics
\eqref{1:RWestimate} holds true.
Moreover, there exist constants $0< \delta_\pm\leq\Delta_\pm<1$
with $\delta_++\Delta_-=1$ and $\delta_-+\Delta_+=1$
such that  the asymptotic estimates
\begin{equation}\label{1:Theor1.1pm}
\exp\biggl(-\frac{n\log n}{\delta_\pm}+o(n\log n)\biggr)
\leq
\lambda_n^{\pm}
\leq
\exp\biggl(-\frac{n\log n}{\Delta_\pm}+o(n\log n)\biggr),
\quad n\to\infty
\end{equation}
hold true.
\end{theorem}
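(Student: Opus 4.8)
The plan is to reduce the spectral analysis of $T_V$ to a problem about polynomials and then read the constants $\delta_\pm,\Delta_\pm$ off the geometry of $\Omega_+$ and $\Omega_-$. As a preliminary I would use the reduction familiar from the non-negative case (cf.\ \cite{RaiWar,FilPush}): since $V$ is compactly supported, for the purpose of computing $N((\lambda,\infty);T_V)$ and $N((-\infty,-\lambda);T_V)$ up to errors that do not affect the leading asymptotics one may replace $\Fc^2$ by the space $\mathcal P_N$ of polynomials of degree $<N$, with $N=N(\lambda)$ of order $\abs{\log\lambda}/\log\abs{\log\lambda}$, and work on $\mathcal P_N$ with the two non-negative quadratic forms $q^v_\pm(p)=\int_{\Omega_\pm}v_\pm\abs p^2 e^{-\abs z^2}\,dm$ and the $\Fc^2$-norm $\norm p$. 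The upper bound for the singular values is then immediate: from $-T_{V_-}\le T_V\le T_{V_+}$ one gets $\abs{T_V}\le T_{V_+}+T_{V_-}=T_{\abs V}$, hence $s_n(T_V)\le s_n(T_{\abs V})$, and since $\abs V$ is bounded, compactly supported and bounded below on the nonempty open set $\Int\Omega_+\cup\Int\Omega_-$, applying \eqref{1:RWestimate} to the non-negative symbol $\abs V$ gives $s_n(T_V)\le e^{-n\log n+O(n)}$.

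For the lower bounds on $\lambda_n^\pm$ the hypothesis \eqref{1:disjoint} is essential. By the separating-hyperplane theorem there is a line with $\Co(\Omega_+)$ on one closed side and $\Co(\Omega_-)$ strictly on the other; composing with a rotation (unitary on $\Fc^2$) I may assume $\Omega_+\subset\{\re z\ge b_+\}$ and $\Omega_-\subset\{\re z\le b_-\}$ with $b_-<b_+$. Fixing $c\in\Int\Omega_+$ and using $\abs{e^{\bar c z}p(z)}^2 e^{-\abs z^2}=e^{\abs c^2}\abs{p(z)}^2 e^{-\abs{z-c}^2}$, I would test $T_V$ on the $m$-dimensional subspace $\{e^{\bar c z}p:p\in\mathcal P_m\}$: on it the quadratic form of $T_V$ becomes the Toeplitz form with symbol $V$ against the Gaussian centred at $c$, so, because $\Omega_-$ lies at a positive distance from $c$ (indeed on the far side of the separating line), its contribution to $(T_Vf,f)$ is suppressed relative to the $\Omega_+$-contribution by a definite factor on most of the subspace. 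Retaining the directions on which the $\Omega_-$-form does not exceed half of the $\Omega_+$-form — the number of such directions is the heart of the matter — one obtains an $n$-dimensional subspace, with $n\sim\delta_+ m$, on which $T_V\ge c_0\lambda_m(T_{v_+\chi_{\Omega_+}})$. Since $\lambda_m(T_{v_+\chi_{\Omega_+}})=e^{-m\log m+O(m)}$ and $m\log m=\delta_+^{-1}n\log n+O(n)$, this gives $\lambda_n^+(T_V)\ge\exp(-\delta_+^{-1}n\log n+o(n\log n))$; the constant $\delta_+\in(0,1)$ is the best fraction of degrees of freedom surviving the localisation, and it is strictly smaller than $1$ because forcing a polynomial of degree $<m$ to live near $\Omega_+$ rather than near the circle $\abs z\approx\sqrt m$ costs a genuine proportion of the available dimension. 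The bound for $\lambda_n^-$ is symmetric, with a centre in $\Int\Omega_-$, producing $\delta_-$.

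The upper bound on $\lambda_n^+$ I would obtain from the min-max principle by annihilating the subspace of $\mathcal P_N$ complementary to the top eigenspace of $q^v_-$: a unit vector orthogonal to this subspace is, up to a high-degree remainder negligible for $q^v_+$, concentrated near $\Omega_-$, hence has small $\Omega_+$-mass while $q^v_-$ is bounded below on it, so $(T_Vf,f)$ is small; a dimension count shows that at most a fraction $1-\delta_-$ of the $N$ directions can be kept, which is exactly why the surviving rate for $\lambda_n^+$ is governed by $\Delta_+=1-\delta_-$, and symmetrically $\Delta_-=1-\delta_+$. Finally, for the singular value asymptotics \eqref{1:RWestimate} the upper bound is the first step; the matching lower bound is more delicate (it cannot be read off from the bounds on $\lambda_n^\pm$ alone when $\delta_++\delta_-<1$), and I would establish it by a dedicated variational construction producing an $n$-dimensional subspace on which $\abs{(T_Vf,f)}\ge e^{-n\log n+O(n)}\norm f^2$, using that $q^v_+(f)+q^v_-(f)\ge e^{-n\log n+O(n)}\norm f^2$ holds on all of $\mathcal P_n$ (because $\supp V$ contains an open disk) and that on a suitable subspace of the right asymptotic dimension the cancellation $q^v_+\approx q^v_-$ is excluded. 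Combining these ingredients yields \eqref{1:Theor1.1pm} together with \eqref{1:RWestimate} for $s_n$.

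The step I expect to be the main obstacle is the quantitative ``degrees of freedom'' bookkeeping behind the constants $\delta_\pm$ and the complementarity $\delta_\pm+\Delta_\mp=1$: one must estimate, for polynomials of degree $<N$, the dimension of the subspace on which the $\Omega_+$-form dominates the $\Omega_-$-form (and the converse) and show these are $(\delta_\pm+o(1))N$. This calls for sharp $L^2(\Omega_\pm,e^{-\abs z^2}dm)$ bounds for polynomials — in essence Bernstein/Chebyshev-type estimates linking polynomial growth to the shape of the sets — and, crucially, for control of the interaction between the two families of test functions, which are not orthogonal but are rendered ``asymptotically orthogonal enough'' precisely by the disjointness \eqref{1:disjoint} of the convex hulls. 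The same circle of estimates is what forces the precision $O(n)$, rather than merely $o(n\log n)$, in the singular value asymptotics.
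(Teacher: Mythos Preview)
Your variational outline---study, on polynomials of degree $\le n$, the ratio of the $L^2(\Omega_-)$ and $L^2(\Omega_+)$ masses and transfer the result to $T_V$---is the right strategy and matches the paper's. But the shortcut you take for the upper singular-value bound is wrong: from $-T_{V_-}\le T_V\le T_{V_+}$ one \emph{cannot} conclude $|T_V|\le T_{V_+}+T_{V_-}$. Already for $2\times2$ matrices, if $A=P_1-P_\theta$ is the difference of two rank-one projections at a small angle~$\theta$, then $|A|=|\sin\theta|\,I$ while the smallest eigenvalue of $P_1+P_\theta$ is $1-|\cos\theta|<|\sin\theta|$. The honest bounds $\lambda_n^\pm\le e^{-n\log n+O(n)}$ (which \emph{do} follow from min-max) merge to give only $s_n\le e^{-\frac{n}{2}\log n+O(n)}$; the paper gets the correct coefficient by proving the upper and lower bounds on $N((\lambda,\infty);|T_V|)$ together, via the same outbedding machinery used for $\lambda_n^\pm$.

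The more serious gap is that you have no mechanism for showing your constants $\delta_\pm$ exist in $(0,1)$ and satisfy $\delta_\pm+\Delta_\mp=1$; you correctly flag this as the main obstacle but do not propose tools to overcome it. The Gaussian shift $e^{\bar c z}$ is a red herring (the weight $e^{-|z-c|^2}$ is bounded above and below on $\Omega_\pm$, so it changes nothing): you are really analysing the operator $A_n^+$ on $\Pc_n$ whose Rayleigh quotient is $\|p\|_-^2/\|p\|_+^2$. The paper needs two non-trivial inputs here. First, Walsh's theorem gives a fixed polynomial $p_0$ that uniformly approximates $\chi_{\Omega_+}$ on $\Omega_+\cup\Omega_-$; multiplication by $p_0$ maps $\Ran E([0,a_2];A_n^+)$ into $\Ran E([0,a_1];A_{n+\deg p_0}^+)$, forcing $N((a_1,a_2);A_n^+)=O(1)$ for any fixed $0<a_1<a_2$---this is exactly the statement that yields $\delta_\pm+\Delta_\mp=1$. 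Second, the positivity $\delta_\pm>0$ comes from $n$-th root asymptotics $|P_n^\pm(z)|^{1/n}\to e^{g_\pm(z)}$ for the orthogonal polynomials on $\Omega_\pm$, with $g_\pm$ the Green's function of~$\Omega_\pm^c$; the \emph{lower} bound in this asymptotics (from \cite{StTotik}) is only valid outside $\Co(\Omega_\pm)$, which is precisely where hypothesis~\eqref{1:disjoint} enters. Your ``Bernstein/Chebyshev'' remark points in this direction, but the actual argument is potential-theoretic and not supplied by your sketch.
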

Theorem~\ref{1:theorem1.1}
yields the following estimates
for the eigenvalue counting functions:
\begin{gather}\label{1:RWN}
N((\lambda,\infty);\abs{T_V})\equiv N((\lambda,\infty);T_V)+N((\lambda,\infty);-T_V)
=
\frac{\abs{\log\lambda}}{\log\abs{\log\lambda}}
(1+o(1)), \quad \lambda\to+0,
\\\label{1:RWNpm}
\delta_\pm
\frac{\abs{\log\lambda}}{\log\abs{\log\lambda}}
(1+o(1))
\leq
N((\lambda,\infty);\pm T_V)
\leq
\Delta_\pm
\frac{\abs{\log\lambda}}{\log\abs{\log\lambda}}
(1+o(1)),
\quad \lambda\to+0.
\end{gather}
It would be interesting to study the optimal constants $\delta_\pm$,
$\Delta_\pm$ and
analyse their dependence on the pair of domains $\Omega_+$, $\Omega_-$.
Unfortunately, we do not have anything to say about this
except in
some very special case. Assume that $\Omega_+$ and
$\Omega_-$ can be interchanged by a Euclidean
motion in $\C$. More precisely,
suppose that for some $\theta\in \R$ and $\mathbf c\in\C$,
we can set $\varphi(z)=e^{i\theta}z+\mathbf c$ or 
$\varphi(z)=e^{i\theta}\bar{z}+\mathbf c$
and get $\varphi(\Omega_+)=\Omega_-$ and 
$\varphi(\Omega_-)=\Omega_+$.
\begin{theorem}\label{1:cor1.2}
Assume  the hypotheses of Theorem \ref{1:theorem1.1}
and suppose that $\Omega_\pm$ possess the above described symmetry. Then
$\delta_+=\delta_-=\Delta_+=\Delta_-=1/2$ in
\eqref{1:Theor1.1pm}. Moreover, the remainder estimate in
\eqref{1:Theor1.1pm} can be improved; one has
\begin{equation}\label{1:Cor.1}
\lambda^{\pm}_n=e^{-2n\log n+O(n)}, \quad n\to\infty.
\end{equation}
\end{theorem}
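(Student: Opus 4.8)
The plan is to reduce the general statement to the single, most symmetric, symbol $V_0:=\chi_{\Omega_+}-\chi_{\Omega_-}$, which is of the form \eqref{1:symbol V} (with $v_\pm\equiv1$) and for which the hypothesized symmetry forces an \emph{exact} pairing of the positive and negative spectra of $T_{V_0}$. The starting point is that every Euclidean motion $\varphi$ of $\C$ is implemented on $\Fc^2$ by a unitary (if $\varphi$ preserves orientation) or antiunitary (if it reverses orientation) operator $U_\varphi$, built from the magnetic (Weyl) translations $(\mathbf T_bf)(z)=e^{z\overline b-\abs{b}^2/2}f(z-b)$, the rotations $(\mathbf R_\theta f)(z)=f(e^{-i\theta}z)$ and the conjugation $(\mathbf Cf)(z)=\overline{f(\overline z)}$; these are respectively unitary, unitary and antiunitary on $L^2(\C,e^{-\abs{z}^2}dm)$, they all map $\Fc^2$ onto itself and hence commute with $P_{\Fc^2}$, and one checks directly that $U_\varphi T_gU_\varphi^*=T_{g\circ\varphi^{-1}}$ for every bounded real symbol $g$. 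Concretely, for $\varphi(z)=e^{i\theta}z+\mathbf c$ one takes $U_\varphi=\mathbf T_{\mathbf c}\mathbf R_\theta$, and for $\varphi(z)=e^{i\theta}\overline z+\mathbf c$ one takes $U_\varphi=\mathbf T_{\mathbf c}\mathbf R_\theta\mathbf C$.

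Apply this with $g=V_0$. Since $\chi_A\circ\varphi^{-1}=\chi_{\varphi(A)}$ and $\varphi(\Omega_\pm)=\Omega_\mp$, we get $V_0\circ\varphi^{-1}=\chi_{\Omega_-}-\chi_{\Omega_+}=-V_0$, hence $U_\varphi T_{V_0}U_\varphi^*=-T_{V_0}$. A (anti)unitary conjugation carries a self-adjoint operator to a self-adjoint operator with the same eigenvalues and multiplicities, so the spectrum of $T_{V_0}$ is symmetric about $0$; in particular $\lambda_n^+(T_{V_0})=\lambda_n^-(T_{V_0})$ for every $n$. Consequently the non-zero eigenvalues of $T_{V_0}$ occur in pairs $\pm\lambda_n^+(T_{V_0})$, so that $s_{2n-1}(T_{V_0})=s_{2n}(T_{V_0})=\lambda_n^+(T_{V_0})$. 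By Theorem~\ref{1:theorem1.1} the asymptotics \eqref{1:RWestimate}, that is $s_m(T_{V_0})=e^{-m\log m+O(m)}$, holds for $T_{V_0}$; taking $m=2n$ gives
\[
\lambda_n^{\pm}(T_{V_0})=s_{2n}(T_{V_0})=e^{-2n\log(2n)+O(n)}=e^{-2n\log n+O(n)},
\]
which is exactly \eqref{1:Cor.1} for $V=V_0$.

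It remains to pass from $V_0$ to an arbitrary $V$ of the form \eqref{1:symbol V} with the \emph{same} domains $\Omega_\pm$. Here I would use the fact, to be extracted from the proof of Theorem~\ref{1:theorem1.1}, that the constants $\delta_\pm,\Delta_\pm$ in \eqref{1:Theor1.1pm} are geometric quantities depending only on the pair $(\Omega_+,\Omega_-)$ — precisely the object of the remark following that theorem. Since $-\log\lambda_n^{\pm}(T_{V_0})=2n\log n+O(n)$ by the previous display, the two-sided bound \eqref{1:Theor1.1pm} applied to $V_0$ can hold only with $1/\delta_\pm=1/\Delta_\pm=2$, so $\delta_+=\delta_-=\Delta_+=\Delta_-=1/2$ for the pair $(\Omega_+,\Omega_-)$; feeding this back into \eqref{1:Theor1.1pm} for the given $V$ yields $\lambda_n^{\pm}(T_V)=e^{-2n\log n+o(n\log n)}$. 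To sharpen the remainder to $O(n)$ and recover \eqref{1:Cor.1} in full one inspects the proof of Theorem~\ref{1:theorem1.1}: the factor $o(n\log n)$ there is present only to absorb the possible oscillation of $-\log\lambda_n^{\pm}/(n\log n)$ between $1/\Delta_\pm$ and $1/\delta_\pm$, and in the balanced case $\delta_\pm=\Delta_\pm$ the same estimates deliver an $O(n)$ remainder, matching the $O(n)$ precision of \eqref{1:RWestimate} that is used there. (Alternatively, when the weights $v_+$ and $v_-$ are mutually comparable one can just sandwich $\lambda_n^{\pm}(T_V)$ between constant multiples of $\lambda_n^{\pm}(T_{V_0})$ by the min--max principle and get \eqref{1:Cor.1} directly.)

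The construction of $U_\varphi$ and the identity $V_0\circ\varphi^{-1}=-V_0$ are routine, and the case $V=V_0$ is then immediate from the Raikov--Warzel asymptotics; this is the clean core of the argument. The main obstacle is the last step, the transfer of the \emph{sharp} $O(n)$ bound from $V_0$ to a general symbol: this is not a formal consequence of the statement of Theorem~\ref{1:theorem1.1}, and it requires both the geometric (domains-only) nature of the constants $\delta_\pm,\Delta_\pm$ and a re-reading of the error analysis in the proof of that theorem in the degenerate regime $\delta_\pm=\Delta_\pm$.
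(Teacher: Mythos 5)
Your argument for the special symbol $V_0=\chi_{\Omega_+}-\chi_{\Omega_-}$ is correct and genuinely different from the paper's: you lift the Euclidean symmetry to a (anti)unitary $U_\varphi$ on $\Fc^2$ built from magnetic translations, rotation and conjugation, deduce $U_\varphi T_{V_0}U_\varphi^*=-T_{V_0}$, hence spectral symmetry and $\lambda_n^\pm(T_{V_0})=s_{2n}(T_{V_0})$, and then invoke \eqref{1:RWestimate}. That is a clean and attractive route to \eqref{1:Cor.1} for $V_0$. The paper instead applies the same geometric symmetry one level down, to the auxiliary operators: it observes that $\varphi$ induces a (anti)unitary equivalence $A_n^+\cong A_n^-$, combines $N((0,1);A_n^+)=N((0,1);A_n^-)$ with \eqref{3.1} to get $N((0,1);A_n^+)=N((1,\infty);A_n^+)$, and then Lemma~\ref{Lemma3.1} gives $N((0,1);A_n^\pm)=\tfrac n2+O(1)$; feeding this into the machinery of Lemmas~\ref{3.lemma31}--\ref{3.lemma32} yields \eqref{1:Cor.1} for every $V$ of the form \eqref{1:symbol V} at once.

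The transfer from $V_0$ to a general $V$ is exactly where your proposal breaks. First, from $\lambda_n^\pm(T_{V_0})=e^{-2n\log n+O(n)}$ and the \emph{statement} of Theorem~\ref{1:theorem1.1} you can only deduce $\delta_\pm\le 1/2\le\Delta_\pm$, not equality: \eqref{1:Theor1.1pm} is a pair of one-sided bounds, not a statement that the exponents $1/\delta_\pm$ and $1/\Delta_\pm$ are attained along subsequences. The constraints $\delta_++\Delta_-=1$, $\delta_-+\Delta_+=1$, $\delta_\pm\le\Delta_\pm$ are all compatible with, say, $\delta_\pm=1/3$, $\Delta_\pm=2/3$, so the asymptotics for $V_0$ alone do not pin the constants down. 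Second, even granting $\delta_\pm=\Delta_\pm=1/2$, this only amounts to $N((0,1);A_n^\pm)/n\to 1/2$, i.e.\ $N((0,1);A_n^\pm)=\tfrac n2+o(n)$; when pushed through the counting argument this produces the remainder $o(n\log n)$, not $O(n)$. The sharp remainder genuinely needs the $O(1)$ precision $N((0,1);A_n^\pm)=\tfrac n2+O(1)$, which comes from the exact identity $N((0,1);A_n^+)=N((1,\infty);A_n^+)$ together with Lemma~\ref{Lemma3.1}, not from the asymptotic behaviour of $T_{V_0}$. Third, the parenthetical fallback fails: there is no constant $c>0$ with $c\,V_0\le V\le c^{-1}V_0$ pointwise unless $v_+$ and $v_-$ are essentially constant, because $V_0$ changes sign — on $\Omega_-$ the inequality $V\le c^{-1}V_0$ reads $v_-\ge c^{-1}$ while $V\ge cV_0$ reads $v_-\le c$, which forces $c\ge1\ge c$ and cannot coexist with the analogous constraints on $\Omega_+$. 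So the min--max sandwich is not available. The fix is the one the paper uses: carry the symmetry to the operators $A_n^\pm$ (your $U_\varphi$ restricts to the finite-dimensional picture: the map $f\mapsto f\circ\varphi^{-1}$, or $f\mapsto\overline{f\circ\varphi^{-1}}$ in the orientation-reversing case, is a degree-preserving (anti)isometry $\Pc_n^+\to\Pc_n^-$ intertwining $A_n^+$ and $A_n^-$), obtain $N((0,1);A_n^\pm)=\tfrac n2+O(1)$, and then rerun the proof of Theorem~\ref{1:theorem1.1}.
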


In our construction, the constants
$\delta_\pm$, $\Delta_\pm$ arise through the spectral analysis
of some auxiliary operators related to the pair of domains 
$\Omega_+$, $\Omega_-$, see Section~\ref{sec.b1}.
By analogy with the embedding operators, we call them 
the ``outbedding operators''.
The study of the spectral properties of these ``outbedding operators'' 
seems to be an interesting problem on its own.

\subsection{Notation}
For two domains $\Omega_\pm$ we denote by $\|\cdot\|_{\pm}$ and $(\cdot,\cdot)_{\pm}$
the norm and inner product in $L^2(\Omega_\pm,dm(z))$.
For a domain $\Omega\subset \C$, the set $\Omega^c$ is the 
complement of $\Omega$ in $\C$. 
Notation $\sigma_\ess(A)$ stands for the essential spectrum of 
self-adjoint operator $A$. 
For a closed simple Jordan curve $\Gamma\in\C$, we denote by
$\Int \Gamma$ (resp. $\Ext\Gamma$)
the bounded (resp. unbounded) component of $\C\setminus \Gamma$.
Finally, for a symbol $V$, we denote 
$W(z)=V(z)e^{-\abs{z}^2}$ and $V_\pm=(\abs{V}\pm V)/2$.

\section{Does $T_V$ have infinitely many negative eigenvalues?}\label{qualitative}

Here we address the following question. Let $V=V_+-V_-$ be a bounded compactly
supported function and suppose that neither $V_+$ nor $V_-$ is identically equal to zero.
What are sufficient conditions for $T_V$ to have infinitely many negative eigenvalues?
Of course, since $T_{-V}=-T_V$, this is equivalent to
the same question about positive eigenvalues.
The results presented here are by no means sharp; our purpose 
is  merely to illustrate the fact that the finiteness of the number of negative eigenvalues of $T_V$
depends upon the geometry of the sets $\supp V_+$ and $\supp V_-$.
\subsection{A ``negative'' example: 
the proof of Theorem~\ref{2:PositivePeriph}}
Our reasoning is based on the formula
\begin{equation}
(T_Vf,f)_{\Fc^2}
=
(T_{V_+}f,f)_{\Fc^2}
\left(1-\frac{(T_{V_-}f,f)_{\Fc^2}}{(T_{V_+}f,f)_{\Fc^2}}\right),
\quad 
f\in\Fc^2
\label{b1}
\end{equation}
and on an estimate for the quotient $(T_{V_-}f,f)_{\Fc^2}/(T_{V_+}f,f)_{\Fc^2}$.
For  any $f\in\Fc^2$ we have:
\begin{equation}
(T_{V_+}f,f)_{\Fc^2}
=
\int_{\supp V_+} W_+(z)\abs{f(z)}^2 dm(z)
\geq
\int_{\Ext \Gamma} W_+(z)\abs{f(z)}^2 dm(z).
\label{b2a}
\end{equation}
Next, let $\Gamma_\delta\subset\Ext\Gamma$ be a strip
along $\Gamma$ such that $W_+\geq\delta/2$ in $\Gamma_\delta$. 
Then, for some $C>0$, we have
\begin{equation}
\int_{\Ext \Gamma} W_+(z)\abs{f(z)}^2 dm(z)
\geq
\frac{\delta}{2}
\int_{\Gamma_\delta}\abs{f(z)}^2 dm(z)
\geq
C\int_\Gamma \abs{f(z)}^2 d\ell(z)
\label{b2}
\end{equation}
where $d\ell(z)$ is the length measure on the curve $\Gamma$.
Next, the Cauchy integral formula can be written as
$$
f(z)=\int_\Gamma K(z,\zeta)f(\zeta)d\ell(\zeta),
\quad
z\in\supp V_-,
$$
where $K(z,\zeta)$ is smooth and bounded for $z\in\supp V_-$,
$\zeta\in\Gamma$.
Thus, the integral operator from $L^2(\Gamma,d\ell)$ to
$L^2(\supp V_-, W_-(z)dm(z))$ with the kernel $K(z,\zeta)$
is compact.
It follows that for any $\varepsilon>0$ there exists a subspace
$\mathcal L_\varepsilon\subset \Fc^2$
of a finite codimension such that for all $f\in\mathcal L_\varepsilon$
one has
$$
(T_{V_-}f,f)_{\Fc^2}
=
\int_{\supp V_-} W_-(z) \abs{f(z)}^2 dm(z)
\leq
\varepsilon
\int_\Gamma \abs{f(z)}^2 d\ell(z).
$$
Recalling \eqref{b1}, \eqref{b2a}, \eqref{b2} and choosing $\varepsilon<\frac2{C}$,
we get
$$
(T_Vf,f)_{\Fc^2}\geq \frac12 (T_{V_+}f,f)_{\Fc^2}>0,
\quad \forall f\in\mathcal L_\varepsilon.
$$
Since the codimension of $\mathcal L_\varepsilon$ is finite,
by the min-max principle it follows that there are at most
finitely many negative eigenvalues of $T_V$.
\qed

\subsection{Sufficient conditions:  logarithmic capacity}\label{sec2.2}
This section is based
on sharp  asymptotic eigenvalue
estimates for Bargmann-Toeplitz operators with
non-negative weight obtained in \cite{FilPush}.
These estimates involve logarithmic capacity of the support of the symbol.
The notion of the logarithmic capacity of a compact set in $\C$  is introduced in
the framework of potential theory;
for a detailed exposition, see
e.g. \cite{Lan}. 
Let us recall some basic properties of
logarithmic capacity.

(i) If $K_1\subset K_2\subset\C$, then $\Cp K_1\leq \Cp K_2$.

(ii) For a compact set $K\subset\C$, $\Cp K$ coincides
with the logarithmic capacity of the outer boundary of
$K$ (i.e. the boundary of the unbounded component of 
$K^c$).

(iii) The logarithmic capacity of a disc of the radius $R$ is $R$.

We denote
\begin{equation*}\label{5:internal}
      \supp_-(V)=
\left\{z\in \C\mid\limsup_{r\to+0}
\frac{\log\int_{\abs{\zeta-z}\leq r}\abs{V(\zeta)}dm(\zeta)}{\log
r}<\infty\right\}.
\end{equation*}
It is clear that $\supp_-(V)\subset \supp (V)$, and all the
Lebesgue points $z$ of $V$ such that $V(z)>0$ belong to
$\supp_-(V)$.

\begin{theorem}\label{5:Teo2}
Suppose that the symbol $V=V_+-V_-$ with
positive and negative parts $V_+,V_-\ge0$ satisfies
\begin{equation}\label{5:Teo2.0}
      \Cp \supp_-(V_-)>\Cp \supp( V_+) .
\end{equation}
Then the operator $T_V$ has infinitely many
negative eigenvalues.
\end{theorem}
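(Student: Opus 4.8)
The plan is to argue by contradiction, combining a routine variational reduction with the sharp capacity-dependent eigenvalue asymptotics of \cite{FilPush} for non-negative symbols. Suppose $T_V$ had only finitely many negative eigenvalues, say exactly $k$ counted with multiplicity; the aim is to deduce $\Cp\supp_-(V_-)\le\Cp\supp(V_+)$, contradicting \eqref{5:Teo2.0}.

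\emph{Step 1 (variational reduction).} First I would let $\mathcal L\subset\Fc^2$ be the orthogonal complement of the span of the eigenvectors of $T_V$ with negative eigenvalues, so that $\codim\mathcal L=k$ and, since $T_V=T_{V_+}-T_{V_-}$ and $T_V\ge0$ on $\mathcal L$, by \eqref{1:BargForm} one has $(T_{V_-}f,f)_{\Fc^2}\le(T_{V_+}f,f)_{\Fc^2}$ for all $f\in\mathcal L$. Taking $M_+$ to be the orthogonal complement of the span of the $n-1$ largest eigenvectors of the compact operator $T_{V_+}\ge0$, so that $(T_{V_+}f,f)_{\Fc^2}\le s_n(T_{V_+})\norm{f}^2_{\Fc^2}$ on $M_+$, and intersecting $M:=M_+\cap\mathcal L$ (of codimension $\le n+k-1$), these two bounds combine on $M$ to give $(T_{V_-}f,f)_{\Fc^2}\le s_n(T_{V_+})\norm{f}^2_{\Fc^2}$. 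The min-max principle applied to $T_{V_-}$ then yields, with $k$ fixed,
\[
s_{n+k}(T_{V_-})\le s_n(T_{V_+}),\qquad n\ge1.
\]

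\emph{Step 2 (insert the asymptotics).} The input I would take from \cite{FilPush} is that for bounded compactly supported $W\ge0$,
\[
\log s_n(T_W)\le-n\log n+n\bigl(1+2\log\Cp\supp(W)\bigr)+o(n),
\]
\[
\log s_n(T_W)\ge-n\log n+n\bigl(1+2\log\Cp\supp_-(W)\bigr)+o(n);
\]
what actually matters is only that the leading term $-n\log n$ is universal and that the next-order coefficient depends on the relevant capacity through a strictly increasing function, with the upper bound controlled by $\supp(W)$ and the lower bound by the (smaller) set $\supp_-(W)$. Applying the lower bound to $W=V_-$, and noting that $k$ is fixed so that $(n+k)\log(n+k)=n\log n+o(n)$ and $(n+k)c=nc+o(n)$, gives
\[
\log s_{n+k}(T_{V_-})\ge-n\log n+n\bigl(1+2\log\Cp\supp_-(V_-)\bigr)+o(n).
\]
Combining with the inequality of Step~1 and the upper bound for $T_{V_+}$, cancelling $-n\log n+n$, dividing by $2n$ and letting $n\to\infty$ would yield $\Cp\supp_-(V_-)\le\Cp\supp(V_+)$, contradicting \eqref{5:Teo2.0}. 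Hence $T_V$ has infinitely many negative eigenvalues.

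\emph{Main obstacle.} Step~1 is routine bookkeeping. The delicate point is to invoke \cite{FilPush} in precisely the right form: the lower bound for $T_{V_-}$ must be available with $\Cp\supp_-(V_-)$ in place of the a priori larger $\Cp\supp(V_-)$, the upper bound for $T_{V_+}$ must see only $\supp(V_+)$, and one must check that these estimates apply to the possibly irregular non-negative symbols $V_\pm$ allowed here. Carrying the fixed shift $k$ through the asymptotics and absorbing it into the $o(n)$ term is the only other thing to verify.
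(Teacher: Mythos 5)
Your proof is correct, and it takes a genuinely different route from the paper's. The paper works directly: it considers $T_{-V}$, applies the Weyl--Ky Fan inequality $\lambda^+_{n+m-1}(T_{V_+})=\lambda^+_{n+m-1}(T_V+T_{V_-})\le\lambda^+_n(T_V)+\lambda^+_m(T_{V_-})$ to isolate a lower bound $\lambda^+_n(T_V)\ge\lambda^+_{m+n-1}(T_{V_+})-\lambda^+_m(T_{V_-})$, and then shows via the estimate \eqref{5:FiPu2} that for each large $m$ one can choose $n$ (roughly $n\lesssim m/\log m$, hence unbounded) making the right-hand side positive. Your argument is the contrapositive, set up variationally: assuming only $k$ negative eigenvalues, you intersect the positive spectral subspace of $T_V$ with the tail of the spectral decomposition of $T_{V_+}$ to obtain the clean operator inequality $s_{n+k}(T_{V_-})\le s_n(T_{V_+})$, and then let \eqref{5:FiPu2} force $\Cp\supp_-(V_-)\le\Cp\supp(V_+)$, a contradiction. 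Both routes use the same two sides of \eqref{5:FiPu2} in the same roles (lower bound with $\supp_-(V_-)$, upper bound with $\supp(V_+)$), and both bookkeeping steps (Ky Fan vs.\ your codimension count) are elementary consequences of the min-max principle, so the two proofs are of comparable depth. Your version avoids the sum-of-compacts inequality and the slightly fiddly choice of the sequence $n_m$; the paper's version is constructive in spirit, producing explicit indices $n$ at which $\lambda_n^+(T_V)>0$. Your handling of the fixed shift $k$ and its absorption into the error term is fine, and your ``main obstacle'' remark about \eqref{5:FiPu2} applying to $V_\pm$ is a concern shared equally by the paper's own proof, so it is not a gap specific to your argument.
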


We note that by the properties (i), (ii) of logarithmic capacity,
under the hypothesis of
Theorem~\ref{2:PositivePeriph}
we necessarily have $\Cp\supp_-(V_-)\leq\Cp\supp(V_+)$.
Thus, Theorem~\ref{2:PositivePeriph} and
Theorem~\ref{5:Teo2} are in agreement with
each other.

In order to prove Theorem~\ref{5:Teo2}, we
 need to recall the results of \cite{FilPush} concerning the
behavior of the spectrum of the Bargmann-Toeplitz
operators $T_V$ with non-negative  compactly supported
symbols $V$.
Denote by $s_n=s_n(T_V)$ the singular numbers of
the operator $T_V$ for $V\geq0$.  
Lemma~1 of \cite{FilPush} together with the estimates (1.15), (1.16) 
of \cite{FilPush} and the Stirling formula provides the estimates

\begin{gather}\label{5:FiPu2}
\!\!\!e(\Cp \supp_-(V))^2
\le
\liminf_{n\to\infty}{n}s_n^{1/n}
\le
\limsup_{n\to\infty} {n} s_n^{1/n}\le e(\Cp
\supp(V))^2.
\end{gather}

\begin{proof}[Proof of Theorem~\ref{5:Teo2}]
In order to simplify our notation, 
we will consider $T_{-V}$ instead of $T_V$.
That is, we assume that $\Cp \supp_-(V_+)>\Cp\supp(V_-)$
and prove that $T_V$ has infinitely many positive eigenvalues.
Take
$\varepsilon>0$ such that
$$
a=e(\Cp \supp_-(V_+))^2-\varepsilon>b
=
e(\Cp \supp(V_-))^2+\varepsilon.
$$
It follows from \eqref{5:FiPu2} that for $m,n$
sufficiently large,
\begin{equation}\label{5:Teo2.1}
\lambda^+_n(T_{V_+})\ge n^{-n}a^n,\quad
\lambda^+_m(T_{V_-})\le m^{-m}b^m.
\end{equation}
By the well known inequality for the eigenvalues 
of a sum of two compact operators (see e.g. \cite[Section 95, formula (10)]{RN})
we have
$$
\lambda^+_{n+m-1}(T_{V_+})
=
\lambda^+_{n+m-1}(T_V+T_{V_-})
\leq
\lambda_n^+(T_V)+\lambda_m(T_{V_-}),
$$
which can be rewritten as
\begin{equation}
\label{b3}
\lambda_{n}^+(T_V)\ge \lambda^+_{m+n-1}(T_{V_+})-\lambda^+_{m}(T_{V_-}).
\end{equation}
Below we choose an increasing sequence of integers $n_1$, $n_2$,\dots
such that for each $n=n_m$ the r.h.s. of \eqref{b3} is positive.
This will prove that $T_V$ has infinitely many positive eigenvalues.

By \eqref{5:Teo2.1}, it suffices to prove that for each $m$ there
exists $n$ such that
\begin{equation}\label{5:Teo2.3}
(m+n-1)^{-(m+n-1)}a^{m+n-1}>m^{-m}b^m .
\end{equation}
An elementary analysis shows that if $m$ is sufficiently
large, then any $n<\gamma m/\log m$ with $\gamma<\log(a/b)$
satisfies \eqref{5:Teo2.3}.
This proves the required statement.
\end{proof}

\subsection{Application of conformal mapping}

Theorem~\ref{2:PositivePeriph} shows that if $\supp V_-$ is
``locked'' inside $\supp V_+$, then $T_V$ has finitely many  negative
eigenvalues. Here we show that in some sense the converse
is true: if $V$ is negative somewhere at the outer boundary
of its support, then $T_V$ has infinitely many negative eigenvalues.

We state the following result in its simplest form; it is easy to suggest
various generalisations. 
\begin{theorem}\label{Theorem2G}
Suppose that $V$ is continuous and the outer boundary of $\supp V$ is a smooth
simple curve $\Gamma$. Suppose that for some $z_0\in\Gamma$
and some $r>0$, one has
$$
V(z)<0 \text{ for all $z\in\Int \Gamma$, $\abs{z-z_0}<r$. }
$$
Then $T_V$ has infinitely many negative eigenvalues.
\end{theorem}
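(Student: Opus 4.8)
The plan is to mimic the strategy of Theorem~\ref{5:Teo2}, but now exploiting the local information near $z_0$ instead of global capacities. The idea is that the hypothesis pins down a small region $D=\Int\Gamma\cap\{|z-z_0|<r\}$ on which $V$ is strictly negative, and we want to show that $-V$ behaves like a ``non-negative symbol supported on a set of positive capacity'' as far as a lower bound on the eigenvalues of $T_{-V}=T_{V_-}-T_{V_+}$ is concerned. However, the direct analogue of \eqref{5:FiPu2} applied to $T_{-V}$ is useless because $\Cp\supp_-(-V)=\Cp\supp_-(V_+)$ can dominate. So instead I would first \emph{transplant} the problem by a conformal mapping, which is the real point of this subsection.

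First I would use the Riemann mapping theorem: since $\Gamma$ is a smooth simple curve, there is a conformal map $\varphi$ from $\Ext\Gamma\cup\{\infty\}$ onto the exterior of some disc (or onto $\{|w|>1\}$), smooth up to the boundary, with $\varphi(\infty)=\infty$ and $\varphi'(\infty)>0$. The key algebraic fact is that composition with a conformal map that fixes $\infty$ intertwines Fock-space Toeplitz operators with Toeplitz operators in a Bargmann-type space with a weight that is comparable to $e^{-|z|^2}$ on compact sets --- more precisely one works with the unitary $f\mapsto (f\circ\varphi)\cdot(\varphi')^{1/2}$ between the relevant spaces. Under this map the symbol $V$ pulls back to a symbol $\widetilde V$ whose support has outer boundary a \emph{circle}, and whose negative part is separated from zero on an arc of that circle together with a small region just inside it. The point of arranging a circular outer boundary is that one then has exact control of the monomial basis and can invoke the capacity estimates \eqref{5:FiPu2} in a favourable configuration. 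I would then set up, as in the proof of Theorem~\ref{5:Teo2}, the splitting $T_{\widetilde V}=T_{\widetilde V_+}-T_{\widetilde V_-}$ and the eigenvalue inequality
\begin{equation*}
\lambda_n^-(T_{\widetilde V})=\lambda_n^+(T_{-\widetilde V})=\lambda_n^+(T_{\widetilde V_-}-T_{\widetilde V_+})\geq\lambda_{m+n-1}^+(T_{\widetilde V_-})-\lambda_m^+(T_{\widetilde V_+}).
\end{equation*}
The lower bound on $\lambda_k^+(T_{\widetilde V_-})$ comes from the left inequality in \eqref{5:FiPu2} once we know $\Cp\supp_-(\widetilde V_-)>0$, which holds because $\widetilde V_-$ is bounded below on a set of positive area (hence positive capacity); the upper bound on $\lambda_m^+(T_{\widetilde V_+})$ comes from the right inequality in \eqref{5:FiPu2}.

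The difficulty is that this crude comparison gives $\Cp\supp_-(\widetilde V_-)\le\Cp\supp(\widetilde V_+)$, exactly the \emph{wrong} direction, so one cannot conclude directly. The actual mechanism must be more delicate: one should localise the operator $T_{\widetilde V_-}$ to the arc near $z_0$ and use that functions in the Fock space which are large near that boundary arc are, after the conformal change of variables, essentially orthogonal to functions concentrated elsewhere --- in other words, one needs an \emph{asymptotic orthogonality} or a ``Cauchy transform / reproducing kernel'' argument in the spirit of the proof of Theorem~\ref{2:PositivePeriph}, but used to produce \emph{lower} bounds rather than upper bounds. Concretely, I would construct an explicit infinite sequence of trial functions $f_N\in\Fc^2$ (after transplant, essentially $w\mapsto w^N$ normalised) for which $(T_{V_-}f_N,f_N)$ is bounded below by $c^N N^{-N}$ with $c$ governed by the capacity of the boundary arc region, while $(T_{V_+}f_N,f_N)$ is of strictly smaller exponential order because $\supp V_+$ sits at a smaller ``conformal radius'' --- this is where the strict inequality must be squeezed out of the geometry, using that $z_0$ lies on the \emph{outer} boundary. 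Then a Glazman-lemma / min-max argument on the span of $\{f_N\}$ over a suitable infinite index set shows $T_V<0$ on an infinite-dimensional subspace, giving infinitely many negative eigenvalues. I expect the main obstacle to be exactly this last point: making precise the sense in which ``negative at the outer boundary'' forces the negative part to win at the top of the spectrum, i.e. quantifying the conformal radius of $\supp V_-$ seen from $z_0$ against that of $\supp V_+$, and handling the non-self-similar geometry of a general smooth curve $\Gamma$ uniformly in $N$.
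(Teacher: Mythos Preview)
You have the right overarching strategy --- conformally transplant and then apply Theorem~\ref{5:Teo2} --- but you choose the wrong conformal map, and this is why you hit a wall. Your Riemann map from $\Ext\Gamma$ to $\{|w|>1\}$ fixes $\infty$, so it is bi-Lipschitz on the compact set $\supp V$; such a map cannot reverse the capacity inequality, and you correctly observe that after your transplant one still has $\Cp\supp_-(\widetilde V_-)\leq\Cp\supp(\widetilde V_+)$. Everything you write after that point (asymptotic orthogonality, trial monomials, ``conformal radius seen from $z_0$'') is an attempt to repair a gap that should instead be closed by a different choice of $\varphi$.

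The paper's proof uses the inversion $\varphi(z)=(z-z_1)^{-1}$ with $z_1$ chosen just \emph{outside} $\supp V$ and very close to $z_0$. This map sends $z_1$ (not $\infty$) to $\infty$, so a small disc around $z_0$ inside $\Gamma$ --- on which $V<0$ --- is blown up to a set of arbitrarily large diameter and hence arbitrarily large logarithmic capacity, while $\supp V_+$, which stays a positive distance from $z_1$, is mapped into a bounded set whose capacity stays bounded. Choosing $z_1$ close enough to $z_0$ forces $\Cp\varphi(\Omega_-)>\Cp\varphi(\supp V_+)$, i.e.\ exactly the hypothesis \eqref{5:Teo2.0} for the transplanted symbol, and Theorem~\ref{5:Teo2} then finishes the proof directly, with no further trial-function construction needed.

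There is also a technical point you gloss over: composition with a conformal map does not act nicely on $\Fc^2$ because of the Gaussian weight. The paper handles this by first passing from the Bargmann--Toeplitz operator $T_V$ to a Bergman--Toeplitz operator $T_W(\Dc)$ on a bounded domain $\Dc\supset\supp V$ (the equivalence of ``infinitely many negative eigenvalues'' uses only that analytic polynomials are dense in both spaces and that the quadratic form $(Wf,f)$ is domain-independent for polynomials $f$). Conformal maps \emph{do} act unitarily between Bergman spaces, so one transplants there, and then passes back to $\Fc^2$. Your sentence about ``a weight comparable to $e^{-|z|^2}$ on compact sets'' is gesturing at this, but the clean mechanism is the Bergman-space detour.
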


\begin{proof}[Sketch of proof]
The idea of the proof is to apply a conformal mapping which
``blows up'' the support of $V_-$ and then to use
Theorem~\ref{5:Teo2}. 
The proof also uses
the reformulation of the problem in terms of Toeplitz
operators in Bergman spaces. 
Without going into details, we describe
the main steps of the proof.

\textbf{Step 1:}
Let $\Dc$ be a simply connected domain in $\C$ with smooth boundary. 
The Bergman space $\Bc^2(\Dc)$ is the subspace of 
$L^2(\Dc)=L^2(\Dc, dm)$ which consists of all functions analytic in $\Dc$. 
Let $P_{\Bc^2}$ be the orthogonal projection in $L^2(\Dc)$ onto $\Bc^2(\Dc)$. 
Then, for a bounded function $F$ with $\supp F\subset \Dc$, the 
Bergman-Toeplitz operator $T_F(\Dc)$ is the operator 
$$
T_F(\Dc):\Bc^2(\Dc)\to\Bc^2(\Dc), 
\quad T_F(\Dc) g=P_{\Bc^2} Fg, 
\quad g\in\Bc^2(\Dc).
$$
The quadratic form of the operator $T_F(\Dc)$ is given by 
$$
(T_F(\Dc)g,g)_{\Bc^2(\Dc)}
=
(Fg,g)_{L^2(\Dc)},
\quad 
g\in \Bc^2(\Dc).
$$ 
We use the following important observation 
(see \cite{Roz} for the details).
Since the set of all analytic polynomials is dense in $ \Bc^2(\Dc)$,
by the min-max principle it follows that 
the negative spectrum of $T_F(\Dc)$ is infinite if and only if
for any $n\in\N$ there exists a linear set $\Lc_n$, $\dim \Lc_n=n$
of analytic polynomials such that $(Fg,g)_{L^2(\Dc)}<0$ 
for all $g\in\Lc_n$, $g\not=0$. 
Note that for any polynomial $g$ the value of the quadratic
form $(Fg,g)_{L^2(\Dc)}$ depends on $F$ but not on the choice
of the domain $\Dc$ as long as $\supp F\subset \Dc$. 
Thus, the above stated necessary and sufficient condition 
of the infiniteness of the negative spectrum of $T_F(\Dc)$ 
is in fact independent of $\Dc$. 

This reasoning can also be applied 
to the Bargmann-Toeplitz operator $T_V$.
We arrive at the following conclusion (recall that  $W(z)=V(z)e^{-\abs{z}^2}$):
\emph{the Bargmann-Toeplitz operator $T_V$ in $\Fc^2$ has infinitely many 
negative eigenvalues if and only if for some domain 
$\Dc\supset\supp V$ the Bergman-Toeplitz operator $T_W(\Dc)$ 
in $\Bc^2(\Dc)$ has infinitely many negative eigenvalues.}

\textbf{Step 2:}
Under the hypothesis of Theorem~\ref{Theorem2G}, let us fix a point $z_1$ 
outside the support of $V$ and choose a simply connected domain $\Dc$ 
with smooth boundary such that $\supp V\subset \Dc$ 
and $z_1\notin\overline{\Dc}$
(here $\overline{\Dc}$ is the closure of $\Dc$).
Consider the conformal map $w=\varphi(z)=(z-z_1)^{-1}$
and let $\Dc'=\varphi(\Dc)$. 
The map $\varphi$ generates a unitary map $\Phi$ 
from $L^2(\Dc)$ to $L^2(\Dc')$: 
$$
\Phi: f(z)\mapsto (\Phi f)(w)=f(\varphi^{-1}(w))\abs{\varphi'(\varphi^{-1}(w))}^{-1/2}.
$$
We have $\Phi T_W(\Dc) \Phi^*=T_{W'}(\Dc')$ 
with $W'=W\circ\varphi$.
Thus, the operator $T_W(\Dc)$ in $\Bc^2(\Dc)$ has infinitely many negative
eigenvalues if and only if the operator $T_{W'}(\Dc')$ in $\Bc^2(\Dc')$ 
has infinitely many negative eigenvalues.
Combining this with the previous step, we obtain that 
\emph{the operator $T_V$ in $\Fc^2$ has infinitely
many negative eigenvalues if and only if the operator
$T_{V'}$ in $\Fc^2$ has infinitely many negative eigenvalues, 
where $V'(z)=e^{\abs{z}^2}W'(z)$.}

\textbf{Step 3:}
Now it remains to choose $z_1$ in such a way that 
\begin{equation}
\Cp \supp_- (V'_-)>\Cp \supp (V'_+);
\label{*}
\end{equation}
then Theorem~\ref{5:Teo2} can be applied to 
$T_{V'}$.
Clearly, we have $\supp (V'_+)=\varphi(\supp V_+)$.
Next, let $\Omega_-=\{z\in\C\mid V(z)<0\}$; 
by the continuity of $V$, we have 
$$
\Omega_-\subset \supp_-(V_-)
\quad \text{ and } \quad
\varphi(\Omega_-)\subset\supp_-(V'_-).
$$
Now it is easy to see that by choosing $z_1$ sufficiently close to $z_0$, we can 
ensure that 
$$
\Cp \varphi(\Omega_-)>\Cp\varphi(\supp V_+),
$$ 
and thus \eqref{*} holds true.
It follows that $T_{V'}$, and therefore $T_V$ 
has infinitely many negative eigenvalues. 
\end{proof}

\section{Symbols with separated positive and negative parts}\label{Sect.3}

In this section we prove Theorems~\ref{1:theorem1.1} and \ref{1:cor1.2}.
\subsection{``Outbedding'' operators and $A_n^{\pm}$}\label{sec.b1}
Throughout this section, we assume that $\Omega_+$ and $\Omega_-$
are Lipschitz domains which satisfy \eqref{1:disjoint}.
We start with some notation.
For $n\in\N$ we denote by $\Pc_n$ the space of all polynomials in $z$
of degree $\le n$.
The $(n+1)$-dimensional linear space $\Pc_n$ can be endowed with
the inner product structure of $L^2(\Omega_\pm)$.  We denote the resulting
(finite dimensional) Hilbert spaces by $\Pc^{\pm}_n$.
Consider the operators
$$
S_n^+:\Pc^+_n\to \Pc^-_n, \quad S_n^+f=f
\quad\text{ and }\quad
S_n^-:\Pc^-_n\to \Pc^+_n, \quad S_n^-f= f.
$$
Thus, $S_n^+$, $S_n^-$ map each polynomial to itself.
By analogy with embedding operators, we will call
$S_n^+$, $S_n^-$ 
\emph{the outbedding operators.} 
We define the operators $A_n^{\pm} $ in $\Pc^{\pm}_n$  as
 $$A_n^{\pm}= (S^{\pm}_n)^*S^{\pm}_n.$$
The quadratic  forms of these operators are
$$
(A_n^{+}f,f)_{+}=\norm{f}^2_{-}, \quad  f\in \Pc_n^+
\quad \text{ and }\quad
(A_n^{-}f,f)_{-}=\norm{f}^2_{+}, \quad f\in \Pc_n^-.
$$
By definition, we have
$$
\frac{(A_n^+f,f)_+}{\|f\|^2_+}
=
\frac{\|f\|_-^2}{(A_n^-f,f)_-},
\quad f\in\Pc_n,
$$
and therefore, by the min-max principle,
$$
\dim\Ker(A_n^+-\lambda I)=\dim\Ker(A_n^- -\lambda^{-1} I), \quad \lambda>0.
$$
It follows that
\begin{equation}\label{3.1}
N((\lambda,\infty);A_n^\pm)=N((0,\lambda^{-1});A_n^{\mp}), \quad \lambda>0.
\end{equation}

In our discussion of the spectrum of Toeplitz operators,
the distribution of singular values
of the outbedding operators
plays a  crucial role.
\begin{lemma}\label{Lemma3.1}
For any $0<a_1<a_2<\infty$,
\begin{equation}\label{3.2}
    N((a_1,a_2);A_n^{\pm})=O(1)\ {\mathrm{ as }}\ n\to\infty.
\end{equation}
\end{lemma}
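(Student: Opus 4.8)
The statement is that for any fixed window $(a_1,a_2)$ with $0<a_1<a_2<\infty$, the number of eigenvalues of $A_n^\pm$ in that window stays bounded as $n\to\infty$. The key structural fact is the disjointness of the convex hulls, \eqref{1:disjoint}: this should force $\|f\|_-^2/\|f\|_+^2$ to be exponentially small for ``most'' polynomials $f$ of degree $\le n$, so that only a bounded number of eigenvalues can sit at a fixed positive distance from $0$. Since $A_n^+$ and $A_n^-$ are related by \eqref{3.1}, it suffices to treat, say, $A_n^+$, whose eigenvalues are the numbers $\|f\|_-^2/\|f\|_+^2$ over $f\in\Pc_n$; the corresponding statement for $A_n^-$ follows because eigenvalues of $A_n^-$ in $(a_1,a_2)$ correspond to eigenvalues of $A_n^+$ in $(a_2^{-1},a_1^{-1})$ by \eqref{3.1}, which is again a fixed compact subinterval of $(0,\infty)$.

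The plan for $A_n^+$ is as follows. First I would reduce to the extreme case where $\Omega_+$ and $\Omega_-$ are the unit disc centered at the origin and a disc of some radius $R$ centered at a point $c$, chosen so that (by \eqref{1:disjoint} and the standing Lipschitz assumptions) $\Omega_+\subset D(0,1)$ while $D(c,R)\supset\Omega_-$ — or rather, choose a slightly larger disc containing $\Omega_-$ and disjoint from a slightly larger disc containing $\Omega_+$; by the monotonicity $\|f\|_{L^2(\Omega)}\le\|f\|_{L^2(\Omega')}$ for $\Omega\subset\Omega'$ this only enlarges the ratio in a controlled way, and it suffices to prove the bound for discs. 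For discs, the monomials $z^k$ are orthogonal in $L^2$ of a disc centered at the origin, and on $D(c,R)$ one can expand $z^k=\sum_{j} \binom{k}{j}c^{k-j}(z-c)^j$ and compute norms explicitly. The upshot is that there is a fixed constant $q<1$ (depending only on the geometry, essentially the ratio of the ``size'' of $\Omega_+$ as seen from the far disc to the distance between the hulls) such that the operator $A_n^+$ in the monomial-type basis is dominated by a matrix whose entries decay geometrically away from the top degree; more precisely the eigenvalues of $A_n^+$ can be controlled by a $q^k$-type bound.

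The cleanest way to extract ``$N((a_1,a_2);A_n^+)=O(1)$'' is a Schatten/trace-type estimate rather than an eigenvalue-by-eigenvalue argument: I would show $\operatorname{tr}(A_n^+)^p\le C_p$ uniformly in $n$ for some fixed $p>0$ (or for all $p\ge p_0$), by using the disjoint-hull geometry to prove $\lambda_k^+(A_n^+)\le C\,\theta^{\,k}$ for a fixed $\theta\in(0,1)$ and all $k$, $n$. Given such an estimate, $N((a_1,a_2);A_n^+)\le N((a_1,\infty);A_n^+)\le a_1^{-p}\operatorname{tr}(A_n^+)^p\le a_1^{-p}C_p$, which is $O(1)$. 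So the core lemma to establish is the uniform geometric decay $\lambda_k^+(A_n^+)\le C\theta^k$. This follows from a dimension-counting/variational argument: for each $k$, exhibit a subspace of $\Pc_n$ of codimension $k$ on which $\|f\|_-^2\le C\theta^k\|f\|_+^2$. Concretely, after the reduction to discs, project onto the span of the top $n-k$ monomials; the low-degree ``tail'' is what carries the mass on the far disc $\Omega_-$, and one estimates $\|f\|_-^2$ for $f$ supported on high monomials using the explicit binomial expansion and the fact that $\Omega_-$ lies at distance bounded below from $0$, while $\|f\|_+^2$ is comparable to the sum of squares of the top coefficients. The geometric factor $\theta$ comes out as something like $(\operatorname{diam}\Omega_+ / \operatorname{dist})^2$ or a comparable ratio.

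The main obstacle, and the place requiring care, is the reduction to discs combined with making the geometric-decay estimate honest for the original Lipschitz domains: one must ensure that enlarging $\Omega_+$ to a disc and shrinking (the complement of) $\Omega_-$ to a disc is done so that the disjointness of convex hulls is preserved with a margin, and that the resulting comparison of norms is two-sided enough to transfer a bound on $A_n^+$ for the model discs back to a bound on $A_n^+$ for $\Omega_\pm$. A second delicate point is the $L^2$-norm computation on $D(c,R)$: the monomials $z^k$ are not orthogonal there, so the Gram matrix has off-diagonal terms, and one needs a clean estimate — e.g. via the expansion in $(z-c)^j$ together with Stirling — showing the Gram matrix of $\{z^k\}_{k\le n}$ on $\Omega_-$ has operator norm, relative to the diagonal Gram matrix on $\Omega_+$, with eigenvalues decaying geometrically in the degree. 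Once that linear-algebra estimate is in hand, the rest is the short trace-norm argument above.
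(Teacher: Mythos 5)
Your overall strategy has a fatal flaw at its core claim. You propose to bound $N((a_1,\infty);A_n^+)$ uniformly in $n$ via a trace estimate, which you would derive from a uniform geometric decay $\lambda_k(A_n^+)\le C\theta^k$ with a fixed $\theta\in(0,1)$. Both assertions are false. The operator $A_n^+$ does \emph{not} concentrate near zero: its norm $\|A_n^+\|$ grows exponentially in $n$ (see Lemma~\ref{3:PropNorm}, \eqref{3.11}, and the Green's function asymptotics \eqref{3.3}--\eqref{3.4}; concretely, for discs with $\Omega_+ = D_1(0)$ and $\Omega_-$ far away, $f=z^n$ already gives a Rayleigh quotient $\|f\|_-^2/\|f\|_+^2$ that grows exponentially). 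Worse, a \emph{positive proportion} of the eigenvalues of $A_n^+$ sit above any fixed threshold: by \eqref{3.1}, $N((a_1,\infty);A_n^+)=N((0,a_1^{-1});A_n^-)$, and the second part of the proof of Lemma~\ref{3:Prop.2.3} exhibits, for any $\varkappa>0$, a subspace $\Lc_n\subset\Pc_n$ of dimension $[\beta n]$ (with $\beta>0$) on which $\|f\|_+^2/\|f\|_-^2<\varkappa$, so $N((0,\varkappa);A_n^-)\ge[\beta n]\to\infty$ linearly. Hence $N((a_1,\infty);A_n^+)\to\infty$ linearly in $n$, and $\operatorname{tr}(A_n^+)^p$ is unbounded for every $p>0$. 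The intuition that $\|f\|_-^2/\|f\|_+^2$ is small for ``most'' $f\in\Pc_n$ is only half the picture: a positive fraction of eigenvalues goes to $0$ exponentially and a positive fraction goes to $\infty$ exponentially.

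The content of Lemma~\ref{Lemma3.1} is therefore a statement about the \emph{difference} of two linearly growing counting functions being bounded, not about a tail sum being small, and no trace-class argument of the kind you describe can reach it. The paper's mechanism is different in kind: by a theorem of Walsh, there is a \emph{fixed} polynomial $p$, of some degree $m$, with $\sup_{\Omega_-}|p|\le\gamma\inf_{\Omega_+}|p|$ where $\gamma^2=a_1/a_2$. Multiplication by $p$ sends the spectral subspace $\Ran E([0,a_2];A_n^+)$ injectively into $\Pc_{n+m}$ and strictly improves the Rayleigh quotient by the factor $\gamma^2$, so $N([0,a_1];A_{n+m}^+)\ge N([0,a_2];A_n^+)$. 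Rewriting this in terms of the complementary counting function and using $\dim\Pc_{n+m}^+=\dim\Pc_n^+ +m$ together with monotonicity $N((a_1,\infty);A_n^+)\le N((a_1,\infty);A_{n+m}^+)$ yields $N((a_1,a_2);A_n^+)\le m$. The boundedness comes precisely from the fact that the dimension increases by only $m$, a constant depending on $(a_1,a_2)$ and the domains but not on $n$. If you want to salvage your approach, this ``multiply by a fixed polynomial to shift the spectral window'' step is the idea you would need to find; the reduction to discs and the Gram-matrix computation you sketch do not substitute for it.
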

\begin{proof}
1.
Let us choose disjoint closed sets $\Omega'_+$ and $\Omega'_-$
whose boundaries are simple Jordan curves 
and such that $\Omega_\pm\subset\Omega'_\pm$. 
Consider the function $\phi(z)$ defined on $\Omega'_+\cup\Omega'_-$,
such that
$\phi(z)=1$ for $z\in\Omega'_+$ and $\phi(z)=0$ for $z\in\Omega'_-$.
By Theorem 6, Chapter 2 in \cite{Walsh},
the function $\phi$ can be approximated by polynomials uniformly on 
$\Omega'_+\cup\Omega'_-$.
In particular, for any $\gamma>0$ there exists a polynomial 
$p\not\equiv0$ such that
\begin{equation}\label{3.6}
\sup_{\Omega_-}|p(z)|
\leq 
\gamma \inf_{\Omega_+}|p(z)|.
 \end{equation}
Let us fix a polynomial $p$ which satisfies  \eqref{3.6}
with $\gamma^2=a_1/a_2$ and set $m=\deg p$.

2.
Consider the subspace $\Lc_n=\Ran E([0,a_2],{A_n^+})\subset\Pc_n^+$. Then
\begin{equation}\label{3.7}
\|f\|^2_-\le a_2\|f\|^2_+,\quad  \forall f\in \Lc_n.
\end{equation}
Denote $\Mcc_{n+m}=\{pf \ : f\in\Lc_n\}\subset\Pc_{n+m}^+$.
Then for any $g\in \Mcc_{m+n}$, we have
$$
\frac{\|g\|_-^2}{\|g\|_+^2}
=
\frac{\|pf\|_-^2}{\|pf\|_+^2}
\le
\frac{\|f\|_-^2\sup_{\Omega_-}|p|^2}{\|f\|_+^2\inf_{\Omega_+}|p|^2}
\leq
\gamma^2 a_2
=a_1.
$$
It follows that
\begin{equation}\label{3.8}
N([0,a_1];A^+_{m+n})\ge \dim \Mcc_{n+m}
=
\dim \Lc_n=N([0,a_2];A_n^+).
\end{equation}

3.
Since $\dim \Pc_n^+=n+1$, the inequality \eqref{3.8} can be written as
  \begin{equation}\label{3.9}
    (n+m+1)-N((a_1,\infty);A^+_{n+m})\ge (n+1)-N((a_2,\infty);A^+_{n}).
  \end{equation}
  By variational considerations, we have
  $$N((a_1,\infty);A_n^+)\le N((a_1,\infty);A_{n+m}^+).$$
  Combining this with \eqref{3.9}, we get
  $$N((a_1,\infty);A_n^+)-N((a_2,\infty);A_n^+)\le m,$$
  which implies $N((a_1,a_2);{A_n^+})\le m$.
\end{proof}

In the next lemma we  estimate the norm of the outbedding operators
$S_n^{\pm}$ or, equivalently, the norm of the operators $A_n^\pm$.
In order to do this, and for our further analysis,
we need to recall some facts
from \cite{StTotik} related to  the theory of orthogonal polynomials.

Let $P_n^{\pm}$ be the $n$-th degree orthogonal polynomial
corresponding to the measure $\chi_{\Omega_\pm}(z)dm(z)$ in $\C$.
We assume that $P_n^\pm$ is normalised such that
$\norm{P_n^\pm}_\pm=1$.
Let $g_\pm$ be the Green's function corresponding to the domain
$\Omega_\pm^\mathrm{c}$; this function is uniquely defined by the 
requirements that $g_\pm$ is harmonic in $\Omega_\pm^c$, 
vanishes on $\Omega_\pm$ and $g_\pm(z)-\log\abs{z}=O(1)$ 
as $\abs{z}\to\infty$. 
We will use the following estimates from 
\cite[Theorem~1.1.4 and Lemma~1.1.7]{StTotik}:
\begin{equation}\label{3.3}
\limsup_{n\to\infty}|P_n^{\pm}(z)|^{1/n}
\le
e^{g_\pm(z)}
\end{equation}
locally uniformly in $\C$,
\begin{equation}\label{3.4}
\liminf_{n\to\infty}|P_n^\pm(z)|^{1/n}
\ge
e^{g_{\pm}(z)}
\end{equation}
locally uniformly in $\C\setminus \Co(\Omega_\pm)$.
We will denote
\begin{equation}
a_+=\sup_{\Omega_-} g_+, 
\qquad
a_-=\inf_{\Omega_+}g_-.
\label{3.5}
\end{equation}
\begin{lemma}\label{3:PropNorm}
Let $\Omega_+$ and $\Omega_-$ be
domains with Lipschitz boundaries such that
$\Co(\Omega_+)\cap\Co(\Omega_-)=\varnothing$.
 Then, for any $b_+>a_+$ one has
\begin{equation}\label{3.11}
\|S_n^+\|=\|A_n^+\|^{1/2}\le e^{b_+n}
\end{equation}
for  all sufficiently large $n$.
\end{lemma}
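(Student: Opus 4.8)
The plan is to bound $\|A_n^+\|$ by estimating the Rayleigh quotient $\|f\|_-^2/\|f\|_+^2$ uniformly over $f\in\Pc_n$. The key point is that a polynomial of degree $\le n$ can be expanded in the orthonormal basis of orthogonal polynomials $\{P_k^+\}_{k=0}^n$ associated with the measure $\chi_{\Omega_+}dm$. Writing $f=\sum_{k=0}^n c_k P_k^+$, we have $\|f\|_+^2=\sum_{k=0}^n|c_k|^2$ by orthonormality on $\Omega_+$, so the lower part of the quotient is completely under control. For the numerator $\|f\|_-^2=\int_{\Omega_-}|f(z)|^2\,dm(z)$ we need an upper bound for $\int_{\Omega_-}|P_k^+(z)|^2\,dm(z)$ in terms of the behaviour of $P_k^+$ away from $\Co(\Omega_+)$, which is exactly what the estimate \eqref{3.3} provides.

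First I would fix $b_+>a_+=\sup_{\Omega_-}g_+$ and pick an intermediate value, say $a_+<c_+<b_+$. By \eqref{3.3}, which holds locally uniformly in $\C$ (in particular uniformly on the compact set $\overline{\Omega_-}$), there is $N_0$ such that for all $k\ge N_0$ and all $z\in\Omega_-$ one has $|P_k^+(z)|^{1/k}\le e^{c_+}$, hence $|P_k^+(z)|\le e^{c_+k}$ on $\Omega_-$; for the finitely many $k<N_0$ the quantity $\sup_{\Omega_-}|P_k^+|$ is just a finite constant. Consequently $\int_{\Omega_-}|P_k^+(z)|^2\,dm(z)\le C_0\, e^{2c_+k}\le C_0\,e^{2c_+n}$ uniformly for $0\le k\le n$ (absorbing the low-order terms into $C_0=C_0(\Omega_\pm)$ and using $m(\Omega_-)<\infty$). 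Then, by Cauchy--Schwarz in the $(n+1)$ summation indices,
\begin{equation*}
\|f\|_-^2
=\Bigl\|\sum_{k=0}^n c_k P_k^+\Bigr\|_-^2
\le (n+1)\sum_{k=0}^n |c_k|^2\,\|P_k^+\|_-^2
\le (n+1)^2 C_0\, e^{2c_+n}\,\|f\|_+^2 .
\end{equation*}
Thus $(A_n^+f,f)_+=\|f\|_-^2\le (n+1)^2 C_0\,e^{2c_+n}\|f\|_+^2$, so $\|A_n^+\|\le (n+1)^2C_0\,e^{2c_+n}$ and $\|S_n^+\|=\|A_n^+\|^{1/2}\le \sqrt{C_0}\,(n+1)\,e^{c_+n}$. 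Since $c_+<b_+$, the polynomial prefactor $\sqrt{C_0}\,(n+1)$ is absorbed into $e^{(b_+-c_+)n}$ for all sufficiently large $n$, giving $\|S_n^+\|\le e^{b_+n}$, which is \eqref{3.11}. The statement for $A_n^-$ and $b_->a_-$ is identical after swapping the roles of $\Omega_+$ and $\Omega_-$.

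The main obstacle — and the only nontrivial input — is the transition from the pointwise bound on $|P_k^+|$ to a bound valid uniformly in the degree $k$ up to $n$; this is where the precise formulation of \eqref{3.3} as a \emph{locally uniform} $\limsup$ over $z$, combined with the compactness of $\overline{\Omega_-}$, is essential, together with the hypothesis $\Co(\Omega_+)\cap\Co(\Omega_-)=\varnothing$ which guarantees $a_+=\sup_{\Omega_-}g_+<\infty$ (indeed $g_+$ is finite and continuous on the compact set $\overline{\Omega_-}\subset\Omega_+^{\mathrm c}$). The crude factor $(n+1)^2$ from Cauchy--Schwarz is harmless precisely because we have the slack $b_+>a_+$ to spare; no sharper estimate is needed here.
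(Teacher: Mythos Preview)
Your proof is correct and follows essentially the same route as the paper's: expand $f\in\Pc_n$ in the orthonormal basis $\{P_k^+\}_{k=0}^n$, use the locally uniform bound \eqref{3.3} to control $\|P_k^+\|_-$, and absorb the polynomial factor coming from Cauchy--Schwarz into the exponential slack $b_+>a_+$. The only cosmetic differences are that the paper applies the triangle inequality first and then Cauchy--Schwarz on $\sum|c_k|$, whereas you combine these into a single step (your extra factor of $(n+1)$ is harmless); and the paper introduces an $\varepsilon$ at the end where you fix an intermediate $c_+$ at the start.
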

\begin{proof}
It follows from \eqref{3.3} that $\|P_n^+\|_-\le Ce^{b_+n}$
for some $C>0$ and all $n$. 
Let $q$ be a polynomial in $\Pc_n$.
We expand $q$  in the basis of orthogonal polynomials
$P_k^+, \ k=0,\dots,n:$  $q=\sum_{k=0}^{n}c_kP_k^+$.
Then  we have
\begin{gather*}
\|q\|_-^2
\leq
\left( \sum_{k=0}^n|c_k|\|P_k^+\|_-\right)^2
\leq
\max_{k=0,\dots,n} \norm{P_k^+}_-^2
\left(\sum_{k=0}^n\abs{c_k}\right)^2
\leq
C^2e^{2b_+n}\left(\sum_{k=0}^n\abs{c_k}\right)^2
\\
\leq
C^2e^{2b_+n}(n+1)\sum_{k=0}^n\abs{c_k}^2
=
C^2e^{2b_+n}(n+1)\norm{q}_+^2.
\end{gather*}
Thus, for all sufficiently large $n$ we have
$$
\norm{q}_-
\leq
Ce^{b_+n}\sqrt{n+1}\norm{q}_+
\leq
C_\varepsilon e^{(b_++\varepsilon)n}\norm{q}_+
$$
where $\varepsilon>0$ can be chosen arbitrary small.
This proves the required statement.
\end{proof}
\begin{lemma}\label{3:Prop.2.3}
Denote
$$
\delta_\pm=\liminf_{n\to\infty}\frac1n N((0,1);A_n^{\pm}),
\quad
\Delta_\pm=\limsup_{n\to\infty}\frac1n N((0,1);A_n^{\pm}).
$$
Then  we have
\begin{equation}\label{3.10}
0<\delta_\pm\leq \Delta_\pm<1
\end{equation}
and
\begin{equation}
 \delta_-+\Delta_+=1,\quad
\delta_++\Delta_-=1.
\label{3.10a}
\end{equation}
\end{lemma}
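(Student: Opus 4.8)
The plan is to establish the lemma in three steps: first the two identities \eqref{3.10a}, which are pure bookkeeping; then the trivial bounds $\delta_\pm\le\Delta_\pm$; and finally the strict positivity $\delta_\pm>0$, which, combined with \eqref{3.10a}, automatically yields $\Delta_\pm<1$. Only the last step requires real work. For \eqref{3.10a} I would first note that each $A_n^\pm$ is injective, since $\Ker A_n^\pm=\Ker S_n^\pm$ consists of the polynomials in $\Pc_n$ that vanish identically on $\Omega_\mp$, i.e.\ of $0$ alone. Hence all $n+1$ eigenvalues of $A_n^+$ are positive and
\[
N((0,1);A_n^+)+N(\{1\};A_n^+)+N((1,\infty);A_n^+)=\dim\Pc_n^+=n+1 .
\]
Lemma~\ref{Lemma3.1}, applied on any fixed bounded interval containing $1$, gives $N(\{1\};A_n^+)=O(1)$, while \eqref{3.1} with $\lambda=1$ gives $N((1,\infty);A_n^+)=N((0,1);A_n^-)$. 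Combining these, $N((0,1);A_n^+)+N((0,1);A_n^-)=n+O(1)$; dividing by $n$ and letting $n\to\infty$ (the left-hand side tends to $1$) one obtains $\Delta_-=1-\delta_+$ and $\Delta_+=1-\delta_-$, which is \eqref{3.10a}.

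The inequalities $\delta_\pm\le\Delta_\pm$ are immediate, and, given \eqref{3.10a}, the remaining inequalities in \eqref{3.10} are equivalent to $\delta_+>0$ and $\delta_->0$. Since $A_n^-$ is obtained from $A_n^+$ by interchanging the domains $\Omega_+$ and $\Omega_-$, it is enough, by this symmetry, to prove $\delta_+>0$, i.e.\ that $N((0,1);A_n^+)\ge cn$ for some $c>0$ and all large $n$. By the min--max principle this will follow once I exhibit, for every large $n$, a subspace $L\subset\Pc_n^+$ with $\dim L\ge cn$ such that $\|f\|_-^2<\|f\|_+^2$ for all $f\in L\setminus\{0\}$.

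To construct such an $L$ I would combine a polynomial separating $\Omega_-$ from $\Omega_+$, supplied by \eqref{3.6}, with the normalised orthogonal polynomials $P_j^+$. Fix $\gamma\in(0,1)$ and use \eqref{3.6} to pick a nonzero polynomial $p_0$ of some degree $m_0=m_0(\gamma)$ with $\sup_{\Omega_-}|p_0|\le\gamma\inf_{\Omega_+}|p_0|$; here $\inf_{\Omega_+}|p_0|>0$, for otherwise $p_0\equiv0$ on $\Omega_-$ and hence $p_0\equiv0$. Fix also $b_+>a_+$; by \eqref{3.3}, exactly as in the proof of Lemma~\ref{3:PropNorm}, one has $\|P_j^+\|_-\le Ce^{b_+j}$ for all $j$ and a suitable $C$. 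For integers $k_0,J\ge0$ with $m_0k_0+J\le n$ put
\[
L=L(k_0,J)=p_0^{k_0}\cdot\operatorname{span}\{P_0^+,\dots,P_J^+\}\subset\Pc_n^+,\qquad \dim L=J+1 .
\]
For $f=p_0^{k_0}g$ with $g=\sum_{j=0}^Jc_jP_j^+\ne0$, the bounds $\|f\|_-^2\le(\sup_{\Omega_-}|p_0|)^{2k_0}\|g\|_-^2$ and $\|f\|_+^2\ge(\inf_{\Omega_+}|p_0|)^{2k_0}\|g\|_+^2$, together with $\|g\|_+^2=\sum_j|c_j|^2$ and the Cauchy--Schwarz estimate $\|g\|_-^2\le(J+1)C^2e^{2b_+J}\|g\|_+^2$, give
\[
\frac{\|f\|_-^2}{\|f\|_+^2}\le\gamma^{2k_0}(J+1)C^2e^{2b_+J}.
\]
Taking $k_0$ to be the least integer with $2k_0\log(1/\gamma)>2b_+J+\log\bigl((J+1)C^2\bigr)$ makes the right-hand side $<1$, so $L$ has the desired property and $N((0,1);A_n^+)\ge\dim L=J+1$. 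This choice forces $k_0=\tfrac{b_+}{\log(1/\gamma)}J+O(\log J)$, so the degree constraint $m_0k_0+J\le n$ is met by $J=J(n)=\bigl\lfloor n/(1+m_0b_+/\log(1/\gamma)+\varepsilon)\bigr\rfloor$ with a small fixed $\varepsilon>0$; hence $\delta_+\ge\bigl(1+m_0b_+/\log(1/\gamma)\bigr)^{-1}>0$, and by the symmetry noted above $\delta_->0$ too. With \eqref{3.10a} this also gives $\Delta_+=1-\delta_-<1$ and $\Delta_-=1-\delta_+<1$, completing \eqref{3.10}.

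The hard part is the subspace construction in the preceding paragraph; the rest is formal. Two naive attempts do not work: the span of $p_0^0,\dots,p_0^k$ has dimension only $\sim n/m_0$, and, more fundamentally, a subspace spanned by polynomials orthonormal on $\Omega_-$ fails because $\|f\|_+$ can then be exponentially small through cancellation on $\Omega_+$ — this is already visible in $\lambda_{\min}(A_n^-)\approx\|A_n^+\|^{-1}$, which decays exponentially. Writing $f=p_0^{k_0}g$ with $g$ orthonormal on $\Omega_+$ removes this cancellation, while the factor $p_0^{k_0}$ suppresses the exponential growth of $g$ on $\Omega_-$; the only price is the finite factor $1+m_0b_+/\log(1/\gamma)$ in the dimension count, and, importantly, no comparison between $a_+$ and $a_-$ is needed — which is why the cruder covering estimates one can extract from Lemmas~\ref{Lemma3.1} and \ref{3:PropNorm} alone are not sufficient.
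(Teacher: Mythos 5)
Your proof is correct, and it takes a genuinely different route in the one place that matters. The bookkeeping for \eqref{3.10a} --- injectivity of $A_n^\pm$, Lemma~\ref{Lemma3.1} to control $N(\{1\};A_n^+)$, and the duality \eqref{3.1} --- is the same as in the paper. The real work is showing $\delta_+>0$, and here your construction diverges. The paper takes $\Lc_n=\{P^-_{[\alpha n]}q:\deg q<[\beta n]\}$, multiplying low-degree polynomials by a \emph{single} high-degree orthogonal polynomial $P^-_{[\alpha n]}$ for $\Omega_-$, and crucially uses \emph{both} the Stahl--Totik upper bound \eqref{3.3} and the lower bound \eqref{3.4} (the latter to keep $\|P_m^-q\|_+$ from collapsing); the admissible $\alpha$ is then tied to the Green's function constants $a_\pm$. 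You instead take $L=p_0^{k_0}\cdot\operatorname{span}\{P_0^+,\dots,P_J^+\}$, where $p_0$ is a fixed low-degree separating polynomial supplied by Walsh's theorem via \eqref{3.6} (the same device already used in Lemma~\ref{Lemma3.1}) and the $P_j^+$ are orthonormal on $\Omega_+$. Because $|p_0|$ is bounded below on $\Omega_+$ by a positive constant, the lower bound on $\|f\|_+$ is a trivial pointwise estimate and \eqref{3.4} is not needed at all; only the upper estimate $\|P_j^+\|_-\le Ce^{b_+j}$ from \eqref{3.3} (equivalently Lemma~\ref{3:PropNorm}) enters. Your balancing of $k_0\sim\frac{b_+}{\log(1/\gamma)}J$ against the degree constraint is correct and yields $\delta_+\ge(1+m_0b_+/\log(1/\gamma))^{-1}>0$. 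What your variant buys is a lighter dependence on the orthogonal-polynomial asymptotics (no lower Green's function bound, no choice of $\alpha$ depending on $a_\pm$) and a pleasing reuse of the Walsh separation polynomial from Lemma~\ref{Lemma3.1}; the trade-off is that your explicit lower bound on $\delta_+$ is expressed through $(m_0,\gamma)$ rather than through the Green's functions, so it is less directly comparable to the geometry of $\Omega_\pm$. Your closing remarks on why the two naive constructions fail (dimension loss from powers of $p_0$ alone; cancellation on $\Omega_+$ from $\Omega_-$-orthonormal polynomials) are accurate and show you have identified the genuine difficulty.
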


\begin{proof}

1. Let $a_+$, $a_-$ be as in \eqref{3.5} and 
let $b_\pm$ be positive constants such that $b_+>a_+$ and $b_-<a_-$.
Let $q$ be a polynomial with $\deg q\le k$.
Using \eqref{3.3} and \eqref{3.11},
we obtain that for any $\delta>0$ and for all sufficiently large $k$ and $m$,
\begin{equation}\label{3.12}
    \|P_m^-q\|_-\le e^{\delta m}\|q\|_-\le e^{\delta m+b_+k}\|q\|_+.
\end{equation}
On the other hand, by \eqref{3.4}, for
all sufficiently large $m$,
\begin{equation}\label{3.13}
\|P_m^- q\|_+\ge e^{b_-m}\|q\|_+.
\end{equation}
Combining \eqref{3.12} and \eqref{3.13}, we obtain that
for any $b_+>a_+$ and $b_-<a_-$ and 
all sufficiently large $m$ and $k$ we have
\begin{equation}\label{3.14}
    \|P_m^- q\|_-\le e^{b_+k-b_-m}\|P_m^- q\|_+;
\end{equation}
here $\delta$ has been incorporated into $b_-$. 

2.
Let us  choose $\alpha\in(\frac{a_+}{a_++a_-},1)$  and set $\beta=1-\alpha$.
Consider the subspace
$$
\Lc_n=\{P^-_{[\alpha n]}q:\ \deg  q< [\beta n] \}\subset\Pc_n,\ \dim \Lc_n=[\beta n].
$$
By \eqref{3.14}, we have for any $b_-<a_-$ and any $b_+>a_+$
and all sufficiently large $n$,
$$
\|f\|_-\le e^{b_+[\beta n]-b_-[\alpha n]}\|f\|_+, \ \forall f\in \Lc _n.
$$
We have $b_+[\beta n]-b_-[\alpha n]\le(b_+\beta-b_-\alpha)n+b_- $.
By our choice of $\alpha$ and $\beta$,  
we have $a_+\beta-a_-\alpha<0$, and therefore the expression $(b_+\beta-b_-\alpha)$ 
can be made negative
by a suitable choice of $b_\pm$. Thus we obtain that for all sufficiently large $n$,
\begin{equation}\label{3.15}
\|f\|_-< \|f\|_+, \ \forall f\in \Lc_n, \quad f\not=0.
\end{equation}

3.
From \eqref{3.15} it follows that $N((0,1);A_n^{+})\ge \dim \Lc_n=[\beta n],$ and so
$\delta_+\geq \beta>0$.
Interchanging $\Omega_+$ and $\Omega_-$, we also get
$\delta_->0$.
Using  \eqref{3.1} and Lemma~\ref{Lemma3.1}, we obtain
\begin{gather*}
0<\delta_- 
=
\liminf_{n\to\infty}n^{-1}N((0,1);A_n^-)
=
\liminf_{n\to\infty}n^{-1}N((1,\infty);A_n^+)
\\
=
\liminf_{n\to\infty}n^{-1}\bigl(n+1-N((0,1);A_n^+)-N(\{1\};A_n^+)\bigr)
=
1-\Delta_+,
\end{gather*}
and so $\Delta_+<1$. In a similar way, one proves that $\delta_+=1-\Delta_-$
and so $\Delta_-<1$.
\end{proof}

\subsection{Estimates for Toeplitz operators}
In order to prove Theorem \ref{1:theorem1.1},
we first establish some relations between the spectral
distribution functions of
$T_V$  and $A_n^\pm$.
We need two elementary auxiliary statements.
We denote
$D_r(a)=\{z\in\C\mid \abs{z-a}<r\}$ and 
$$
z^n\Fc^2=\{f\in\Fc^2\mid f(z)z^{-n}\in\Fc^2\},
\quad n\in\N.
$$
\begin{lemma}\label{lma3.3}
(i) 
For any $r>0$ there exists a constant $\alpha>0$ 
such that for all sufficiently large $n$, one has
$$
\norm{f}^2_{L^2(D_r(0))}
\leq 
e^{-n\log n+\alpha n}
{\|f\|^2_{\Fc^2}},
\quad 
\forall f\in z^n\Fc^2.
$$
(ii)
For any $a\in\C$ and any $r>0$ there exists a constant $\alpha>0$ 
such that for all sufficiently large $n$, 
$$
\norm{f}^2_{L^2(D_r(a))}
\geq 
e^{-n\log n-\alpha n}
{\|f\|^2_{\Fc^2}},
\quad 
\forall f\in \Pc_n.
$$
\end{lemma}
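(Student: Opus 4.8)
The plan is to reduce both statements to explicit Gaussian integrals over monomials, since everything is rotation-invariant around the relevant centre. For part (i), I first recall that $\{z^k/\sqrt{\pi k!}\}_{k\ge0}$ is an orthonormal basis of $\Fc^2$, so a function $f\in z^n\Fc^2$ has an expansion $f(z)=\sum_{k\ge n}c_k z^k$ with $\|f\|_{\Fc^2}^2=\pi\sum_{k\ge n}|c_k|^2 k!$. Since the disc $D_r(0)$ is rotation-invariant, the monomials $z^k$ remain orthogonal in $L^2(D_r(0))$, and $\|z^k\|_{L^2(D_r(0))}^2 = 2\pi\int_0^r \rho^{2k+1}\,d\rho = \pi r^{2k+2}/(k+1)$. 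Hence
\begin{equation*}
\norm{f}^2_{L^2(D_r(0))}
=
\pi\sum_{k\ge n}|c_k|^2\frac{r^{2k+2}}{k+1}
=
\pi\sum_{k\ge n}|c_k|^2 k!\cdot\frac{r^{2k+2}}{(k+1)!}.
\end{equation*}
So it suffices to bound $\sup_{k\ge n} r^{2k+2}/(k+1)!$ by $e^{-n\log n+\alpha n}$. For $k\ge n$ the sequence $r^{2k+2}/(k+1)!$ is eventually decreasing (once $k+1>r^2$), so for $n$ large the supremum is attained at $k=n$, giving $r^{2n+2}/(n+1)!$; Stirling's formula $(n+1)!=e^{(n+1)\log(n+1)-(n+1)+O(\log n)}$ then yields $r^{2n+2}/(n+1)!=e^{-n\log n+O(n)}$, which is the claimed estimate with a suitable $\alpha=\alpha(r)$.

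For part (ii), I translate to centre the disc at $a$: write $g(w)=f(w+a)$, so that $g$ is entire and $\norm{f}^2_{L^2(D_r(a))}=\norm{g}^2_{L^2(D_r(0))}$. Since $f\in\Pc_n$, the function $g$ is a polynomial of degree $\le n$, say $g(w)=\sum_{k=0}^n b_k w^k$, and by the computation above $\norm{g}^2_{L^2(D_r(0))}=\pi\sum_{k=0}^n|b_k|^2 r^{2k+2}/(k+1)\ge \big(\min_{0\le k\le n} r^{2k+2}/(k+1)\big)\cdot\pi\sum_{k=0}^n|b_k|^2$. On the other side I need an upper bound for $\|f\|_{\Fc^2}^2$ in terms of $\sum|b_k|^2$; this is where one pays a factor. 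Expanding $f(z)=g(z-a)=\sum_{k=0}^n b_k(z-a)^k$ and using $\|f\|_{\Fc^2}\le \sum_k |b_k|\,\|(z-a)^k\|_{\Fc^2}$ together with the crude bound $\|(z-a)^k\|_{\Fc^2}\le \sum_{j=0}^k\binom{k}{j}|a|^{k-j}\|z^j\|_{\Fc^2}=\sum_{j=0}^k\binom{k}{j}|a|^{k-j}\sqrt{\pi j!}\le \sqrt{\pi n!}\,(1+|a|)^n$ for $k\le n$, I get $\|f\|_{\Fc^2}^2\le \pi n!\,(1+|a|)^{2n}(n+1)\sum_{k=0}^n|b_k|^2$ after Cauchy--Schwarz on the finite sum. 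Combining, $\norm{f}^2_{L^2(D_r(a))}\ge \big(\min_{0\le k\le n} r^{2k+2}/(k+1)\big)\cdot (n!)^{-1}(1+|a|)^{-2n}(n+1)^{-1}\|f\|_{\Fc^2}^2$. The minimum over $0\le k\le n$ of $r^{2k+2}/(k+1)$ is either $r^2$ (if $r\ge1$) or $r^{2n+2}/(n+1)$ (if $r<1$), in either case of the form $e^{O(n)}$; and $(n!)^{-1}=e^{-n\log n+O(n)}$ by Stirling. Hence $\norm{f}^2_{L^2(D_r(a))}\ge e^{-n\log n-\alpha n}\|f\|_{\Fc^2}^2$ for a suitable $\alpha=\alpha(a,r)$, as required.

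The routine parts are the monomial integrals and the Stirling bookkeeping; the only point requiring a little care is part (ii), where expanding around the new centre $a$ costs a factor $(1+|a|)^{2n}\cdot n!\cdot (n+1)$, and one must check this is absorbed into the $e^{\alpha n}$ error after the dominant $n!$ in the denominator produces the leading $e^{-n\log n}$ term — i.e. that no second factor of order $n^n$ is accidentally introduced. This is the main thing to watch, but it is harmless because $(1+|a|)^{2n}$ and $(n+1)$ are both of subexponential-in-$n\log n$ size (indeed $(1+|a|)^{2n}=e^{O(n)}$), so they merge into $\alpha$.
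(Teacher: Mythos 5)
Your proof is correct and follows essentially the same route as the paper: expand $f$ in powers of $z$ (resp.\ $z-a$), exploit the orthogonality of monomials in $L^2$ of a disc to get explicit coefficients, bound $\|(z-a)^k\|_{\Fc^2}$ by $k!\,e^{O(k)}$, and finish with Cauchy--Schwarz and Stirling. One parenthetical remark is slightly off --- for $r\ge 1$ the minimum of $r^{2k+2}/(k+1)$ over $0\le k\le n$ is not in general $r^2$ (e.g.\ at $r=1$ it is $1/(n+1)$) --- but the only property you actually use, that this minimum is $e^{-O(n)}$, holds regardless, so the argument is unaffected.
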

\begin{proof}
(i) Write $f\in z^n\Fc^2$ as
$f(z)=\sum_{j=n}^\infty f_jz^j$.  We get
\begin{gather*}
\|f\|^2_{L^2(D_r(0))} =\pi \sum^{\infty}_{j=n}|f_j|^2 \frac{r^{2j+2}}{j+1},
\\
\|f\|^2_{\Fc^2}
=
\sum_{j=n}^\infty\abs{f_j}^2\norm{z^j}_{\Fc^2}^2
=
\pi \sum^{\infty}_{j=n}|f_j|^2 j!
\end{gather*}
and therefore, for all sufficiently large $n$, 
\begin{equation*}
\|f\|^2_{L^2(D_r(0))}
= 
\pi \sum^{\infty}_{j=n}|f_j|^2 j! \frac{r^{2j+2}}{(j+1)!}
\leq 
\frac{r^{2n+2}}{(n+1)!}\|f\|^2_{\Fc^2}.
\end{equation*}
Combining this with the Stirling formula, we obtain 
the required estimate.

(ii)
Without the loss of generality assume $r\leq1$. 
Write $f\in\Pc_n$ as $f(z)=\sum_{j=0}^n f_j (z-a)^j$. 
Then 
\begin{gather*}
\norm{f}^2_{L^2(D_r(a))}
=
\pi\sum_{j=0}^n \abs{f_j}^2
 \frac{r^{2j+2}}{j+1},
\\
\norm{f}_{\Fc^2}
\leq
\sum_{j=0}^n\abs{f_j}\norm{(z-a)^j}_{\Fc^2}
\leq
\max_{k=0,\dots,n}\norm{(z-a)^k}_{\Fc^2}\sum_{j=0}^n \abs{f_j}.
\end{gather*}
An elementary calculation shows that 
$\norm{(z-a)^k}_{\Fc^2}^2\leq e^{\gamma k}k!$
for some $\gamma>0$ and all $k\in\N$. 
Thus, 
\begin{multline*}
\norm{f}_{\Fc^2}^2
\leq
e^{\gamma n}n!\left(\sum_{j=0}^n\abs{f_j}\right)^2
\leq
e^{\gamma n}n!
\left(\sum_{j=0}^n\abs{f_j}\frac{r^{j+1}}{\sqrt{j+1}}\right)^2
(n+1)r^{-2n-2}
\\
\leq
e^{\gamma n} n! \left(\sum_{j=0}^n \abs{f_j}^2\frac{r^{2j+2}}{j+1}\right)
(n+1)^2 r^{-2n-2}
\leq
e^{\gamma n}n!
\pi^{-1}\norm{f}^2_{L^2(D_r(a))}
(n+1)^2 r^{-2n-2}.
\end{multline*}
Combining this with the Stirling formula, 
we get the required statement.
\end{proof}

Recall that by our assumptions, the symbol $V$ is given 
by the formula \eqref{1:symbol V}, 
where the functions $v_{\pm}$ are bounded, positive, and 
separated from zero. 
Let us write 
$$
W(z)=W_+(z)-W_-(z)
=
w_+(z)\chi_{\Omega_+}(z)-w_-(z)\chi_{\Omega_-}(z),
$$ 
where the functions $w_\pm$ satisfy
\begin{equation}\label{3:symbol}
0<\tau_\pm \le w_\pm(z)\le \sigma_\pm
\end{equation}
with some constants $\tau_\pm$ and $\sigma_\pm$. 

\begin{lemma}\label{3.lemma31}
For all sufficiently large $\alpha$, all sufficiently small 
$\varkappa\in(0,1)$ and
all sufficiently large $n$, one has
\begin{equation}\label{4.1}
N((e^{-n\log n}e^{-\alpha n},\infty);T_V)
\geq N((0,\varkappa);{A_n^+}).
\end{equation}
\end{lemma}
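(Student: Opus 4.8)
The plan is to use the subspace $\Ran E((0,\varkappa);A_n^+)\subset \Pc_n^+$ as a source of trial functions for $T_V$, via the min-max principle. Fix $n$ and let $\Lc_n=\Ran E((0,\varkappa);A_n^+)\subset\Pc_n$, so that $\dim\Lc_n=N((0,\varkappa);A_n^+)$ and $\|f\|_-^2<\varkappa\|f\|_+^2$ for every $f\in\Lc_n$. The key point is that these are polynomials of degree $\le n$, so they are genuine elements of $\Fc^2$; moreover they sit inside $\Pc_n$, where Lemma~\ref{lma3.3}(ii) gives a lower bound $\|f\|_{L^2(D_r(a))}^2\ge e^{-n\log n-cn}\|f\|_{\Fc^2}^2$ for a disc $D_r(a)\subset\Omega_+$ (chosen so that $w_+\ge\tau_+$ there). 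I would then estimate, for $f\in\Lc_n$,
\begin{equation*}
(T_Vf,f)_{\Fc^2}
=
\int_{\Omega_+}w_+|f|^2\,dm-\int_{\Omega_-}w_-|f|^2\,dm
\ge
\tau_+\|f\|_+^2-\sigma_-\|f\|_-^2
\ge
(\tau_+-\sigma_-\varkappa)\|f\|_+^2,
\end{equation*}
so for $\varkappa<\tau_+/\sigma_-$ this is bounded below by $c\|f\|_+^2$ with $c>0$.

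Next I would convert the lower bound on $\|f\|_+^2$ into a lower bound on $\|f\|_{\Fc^2}^2$. Since $D_r(a)\subset\Omega_+$, we have $\|f\|_+^2\ge\|f\|_{L^2(D_r(a))}^2\ge e^{-n\log n-cn}\|f\|_{\Fc^2}^2$ by Lemma~\ref{lma3.3}(ii). Combining, $(T_Vf,f)_{\Fc^2}\ge e^{-n\log n-\alpha n}\|f\|_{\Fc^2}^2$ for all $f\in\Lc_n$, once $\alpha$ is taken large enough (absorbing $c$ and the constants above) and $n$ large enough. By the min-max principle, $T_V$ then has at least $\dim\Lc_n=N((0,\varkappa);A_n^+)$ eigenvalues in $(e^{-n\log n-\alpha n},\infty)$, which is exactly \eqref{4.1}.

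The only genuine subtlety — the "main obstacle" — is making sure the trial space really lives inside a single polynomial space $\Pc_n$ so that Lemma~\ref{lma3.3}(ii) applies with the same degree bound $n$ for all $f$ simultaneously; this is why one works with $A_n^+$ on $\Pc_n^+$ rather than with the full Toeplitz operator, and it is the reason the estimate produces $N((0,\varkappa);A_n^+)$ rather than $N((0,1);A_n^+)$ (one needs the strict gap $\|f\|_-^2<\varkappa\|f\|_+^2$ with $\varkappa$ small to beat the bounded factors $\tau_+,\sigma_-$). Everything else is the routine bookkeeping of constants already carried out in Lemma~\ref{lma3.3}, together with the elementary bound \eqref{3:symbol}.
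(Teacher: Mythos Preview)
Your proposal is correct and follows essentially the same route as the paper's proof: the same trial subspace $\Ran E((0,\varkappa);A_n^+)\subset\Pc_n$, the same use of \eqref{3:symbol} to bound $(T_Vf,f)_{\Fc^2}\geq c\,\|f\|_+^2$ for $\varkappa<\tau_+/\sigma_-$, the same appeal to Lemma~\ref{lma3.3}(ii) on a disc $D_r(a)\subset\Omega_+$, and the same conclusion via min--max. The only cosmetic difference is that the paper writes the first estimate as $(W_-f,f)_-<\gamma\,(W_+f,f)_+$ with $\gamma=\sigma_-\varkappa/\tau_+$ before factoring, whereas you go directly to $(\tau_+-\sigma_-\varkappa)\|f\|_+^2$; the two are equivalent.
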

\begin{proof}
Choose $\varkappa$ sufficiently small so that 
$\gamma=\frac{\sigma_-}{\tau_+}\varkappa<1$.
For all $f$ in the subspace $\Ran E((0,\varkappa);{A_n^+})\subset\Pc_n^+$
we have
$$
(W_-f,f)_-
\leq
\sigma_-\norm{f}_-^2
<
\sigma_- \varkappa \norm{f}_+^2
\leq
\frac{\sigma_-\varkappa}{\tau_+}(W_+f,f)_+
=
\gamma(W_+f,f)_+.
$$
Therefore
\begin{equation}
\frac{(T_V f,f)_{\Fc^2}}{\|f\|^2_{\Fc^2}}
=
\frac{(W_+f,f)_+-(W_-f,f)_-}{\|f\|^2_{\Fc^2}}
\geq
(1-\gamma)\frac{(W_+f,f)_+}{\|f\|^2_{\Fc^2}}
\geq
(1-\gamma)\tau_+\frac{\|f\|_+^2}{\|f\|^2_{\Fc^2}}
\label{4.1a}
\end{equation}
for all $f\in\Ran E((0,\varkappa);{A_n^+})$.
Choose a disk $D_r(a)\subset\Omega_+$; 
then $\norm{f}_+\geq\norm{f}_{L^2(D_r(a))}$.
Now combining \eqref{4.1a} with 
Lemma~\ref{lma3.3}(ii), we obtain
$$
(T_Vf,f)_{\Fc^2}
\geq 
e^{-n\log n}e^{-\alpha n}\norm{f}^2_{\Fc^2},
\quad 
\forall f\in\Ran E((0,\varkappa);A_n^+),
$$
for some constant $\alpha>0$ and all sufficiently large $n$.
Applying the min-max principle, we obtain the required result.
\end{proof}
\begin{lemma}\label{3.lemma32}
For all sufficiently large $\alpha$, all sufficiently small $\varkappa$
and all sufficiently large $n$ one has
\begin{equation}\label{4.2}
N((e^{-n\log n}e^{\alpha n},\infty);T_V)
\leq N((0,\varkappa^{-1});A_n^+).
\end{equation}
\end{lemma}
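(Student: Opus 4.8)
The claim \eqref{4.2} is, in spirit, the dual of Lemma~\ref{3.lemma31}: there we produced many \emph{large} eigenvalues of $T_V$ from the small eigenvalues of $A_n^+$ by restricting to polynomials of degree $\le n$; here we must \emph{bound} the number of eigenvalues of $T_V$ exceeding $e^{-n\log n}e^{\alpha n}$ by exhibiting a subspace of finite codimension on which the quadratic form $(T_Vf,f)_{\Fc^2}$ is small, and then counting. The natural subspace to work with is $z^{n+1}\Fc^2$ (functions vanishing to order $n+1$ at the origin): on it, part~(i) of Lemma~\ref{lma3.3} gives super-exponential smallness of the $L^2$-norm on any fixed disc, hence — after covering $\Omega_+$ by finitely many discs — of $\|f\|_+^2$, so that $(T_Vf,f)_{\Fc^2}\le(W_+f,f)_+\le\sigma_+\|f\|_+^2\le e^{-n\log n}e^{\alpha n}\|f\|^2_{\Fc^2}$ for a suitable $\alpha$ and all large $n$. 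Since $z^{n+1}\Fc^2$ has codimension $n+1$ in $\Fc^2$, the min-max principle then yields $N((e^{-n\log n}e^{\alpha n},\infty);T_V)\le n+1$. This already gives an estimate of the right order, but with $n+1$ on the right rather than $N((0,\varkappa^{-1});A_n^+)$, which is the sharper quantity the theorem needs.

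\textbf{Getting the sharp right-hand side.} To replace $n+1$ by $N((0,\varkappa^{-1});A_n^+)$ one must use the \emph{positive} eigenvalues of $A_n^+$ — equivalently, by \eqref{3.1}, the small eigenvalues of $A_n^-$ — to produce an additional batch of directions along which the form of $T_V$ is non-positive. Concretely, $N((0,\varkappa^{-1});A_n^+) = (n+1) - N((\varkappa^{-1},\infty);A_n^+) - N(\{\varkappa^{-1}\};A_n^+)$, so I would argue that on the span of $\Ran E([\varkappa^{-1},\infty);A_n^+)\subset\Pc_n^+$ the quadratic form of $T_V$ is $\le0$: for such $f$ one has $\|f\|_+^2\le\varkappa\|f\|_-^2$, hence $(W_+f,f)_+\le\sigma_+\|f\|_+^2\le\sigma_+\varkappa\|f\|_-^2\le(\sigma_+\varkappa/\tau_-)(W_-f,f)_-$, which is $\le(W_-f,f)_-$ once $\varkappa<\tau_-/\sigma_+$, so that $(T_Vf,f)_{\Fc^2}=(W_+f,f)_+-(W_-f,f)_-\le0$ there. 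Combining this finite-dimensional subspace of $\Pc_n^+$ with the subspace $z^{n+1}\Fc^2$ on which the form is $\le e^{-n\log n}e^{\alpha n}\|f\|^2_{\Fc^2}$: their sum has codimension at most $(n+1)-\dim\Ran E([\varkappa^{-1},\infty);A_n^+)=N((0,\varkappa^{-1});A_n^+)+N(\{\varkappa^{-1}\};A_n^+)$, and (after a harmless adjustment of $\varkappa$ so that $\varkappa^{-1}$ is not an eigenvalue, or absorbing the one-dimensional discrepancy) the min-max principle gives \eqref{4.2}.

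\textbf{The main obstacle.} The delicate point is that the two subspaces $z^{n+1}\Fc^2$ and $\Ran E([\varkappa^{-1},\infty);A_n^+)$ are not orthogonal and not a priori disjoint — a nonzero polynomial in $\Pc_n$ cannot lie in $z^{n+1}\Fc^2$, so in fact $z^{n+1}\Fc^2\cap\Pc_n=\{0\}$, which is reassuring, but to run min-max cleanly one wants a single subspace of controlled codimension on which $(T_Vf,f)_{\Fc^2}$ is uniformly $\le e^{-n\log n}e^{\alpha n}\|f\|_{\Fc^2}^2$. I would obtain this as follows: decompose an arbitrary $f$ in the algebraic sum as $f = p + h$ with $p\in\Ran E([\varkappa^{-1},\infty);A_n^+)$ and $h\in z^{n+1}\Fc^2$, estimate $(T_Vf,f)_{\Fc^2}$ by expanding the form and controlling the cross term $2\re(W_\pm p,h)$ via Cauchy--Schwarz together with Lemma~\ref{lma3.3}(i) applied to $h$ and the crude bound $\|p\|_+\le\sigma_+^{-1/2}\dots$ — the cross terms carry a factor $e^{-\frac12 n\log n}$, which is dominated by $e^{-n\log n}e^{\alpha n}$ absolutely but must be checked to still beat $\|f\|_{\Fc^2}^2$ after dividing. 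Getting these bookkeeping constants to line up — in particular making sure the contribution of $p$ through $(W_+p,p)_+-(W_-p,p)_-\le0$ really does absorb the cross terms rather than the other way around — is the one place where the argument needs care; everything else is a direct application of Lemma~\ref{lma3.3}(i), the boundedness \eqref{3:symbol} of $w_\pm$, and the min-max principle.
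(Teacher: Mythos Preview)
Your overall strategy is exactly the paper's: take $\Lc_n = \Ran E([\varkappa^{-1},\infty);A_n^+) \dot{+} z^{n+1}\Fc^2$, note that its codimension in $\Fc^2$ equals $N((0,\varkappa^{-1});A_n^+)$, show $(T_V\phi,\phi)_{\Fc^2} \le e^{-n\log n + \alpha n}\|\phi\|_{\Fc^2}^2$ for all $\phi\in\Lc_n$, and apply min-max. Two minor corrections: the two summands are in fact \emph{orthogonal} in $\Fc^2$ (the monomials $z^k$ form an orthogonal system there), so for $\phi=p+h$ one has $\|\phi\|_{\Fc^2}^2 = \|p\|_{\Fc^2}^2 + \|h\|_{\Fc^2}^2 \ge \|h\|_{\Fc^2}^2$, which is what is actually used at the end; and your codimension formula carries a spurious $+N(\{\varkappa^{-1}\};A_n^+)$.

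The real gap is the cross-term estimate. Bounding $2\re(W_\pm p,h)_\pm$ by Cauchy--Schwarz and then applying Lemma~\ref{lma3.3}(i) to the $h$-factor gives at best $C\|p\|_\pm\,\|h\|_\pm \le C e^{-\frac12 n\log n + O(n)}\|\phi\|_{\Fc^2}^2$, and $e^{-\frac12 n\log n}$ is \emph{much larger} than the target threshold $e^{-n\log n + \alpha n}$ (not dominated by it, contrary to what you write). So this route, as stated, only yields \eqref{4.2} with the wrong exponent $\tfrac12 n\log n$. The fix is the one you gesture at in your last sentence but do not carry out: use the asymmetric Young inequality $2ab \le \epsilon a^2 + \epsilon^{-1}b^2$ to throw the cross terms entirely onto the $h$-side. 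The paper does this by writing
\[
(W_+\phi,\phi)_+ \le 2\sigma_+\|p\|_+^2 + 2\sigma_+\|h\|_+^2,
\qquad
(W_-\phi,\phi)_- \ge \tau_-(1-\epsilon)\|p\|_-^2 - \tau_-(\epsilon^{-1}-1)\|h\|_-^2,
\]
and then using $\|p\|_+^2 \le \varkappa\|p\|_-^2$ to collect the $p$-terms into a single coefficient $2\sigma_+\varkappa - \tau_-(1-\epsilon)$, which is negative for small $\varkappa,\epsilon$ and can be dropped. Everything that remains involves only $\|h\|_\pm^2$, bounded by $e^{-n\log n + \alpha n}\|h\|_{\Fc^2}^2 \le e^{-n\log n + \alpha n}\|\phi\|_{\Fc^2}^2$ via Lemma~\ref{lma3.3}(i) applied to a single disc $D_r(0)\supset\Omega_+\cup\Omega_-$ (no covering argument is needed). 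The crucial point is that the smallness of $\|h\|_\pm$ alone cannot control the cross terms to the required order; it is the \emph{strict} negativity $\tau_- - \sigma_+\varkappa > 0$ on the polynomial part that absorbs them.
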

\begin{proof}
1.
Note that the subspace $z^{n+1}\Fc^2$  is orthogonal to $\Pc_n$ in $\Fc^2$
and therefore these subspaces are linearly independent.
Consider the direct sum of subspaces
\begin{equation*}
\Lc_n
=
\bigl(\Ran E([\varkappa^{-1},\infty),{A_n^+})\bigr) \dot{+}  (z^{n+1}\Fc^2)
\end{equation*}
The codimension of $\Lc_n$ in $\Fc^2$ 
equals $(n+1)-N([\varkappa^{-1},\infty);A_n^+)=N((0,\varkappa^{-1});A_n^+).$

Let $\phi\in\Lc_n$, $\phi=f+g$, $f\in\Ran E([\varkappa^{-1};\infty),{A_n^+})$, $g\in z^{n+1}\Fc^2$,
so $\norm{f}^2_+\le \varkappa\norm{f}^2_-$ and
$\norm{\phi}^2_{\Fc^2}=\norm{f+g}^2_{\Fc^2}
=\norm{f}^2_{\Fc^2}+\|g\|^2_{\Fc^2}\ge\|g\|^2_{\Fc^2}$.
Using the Cauchy-Schwartz inequality and \eqref{3:symbol}, we get
\begin{align*}
(T_{V_+}\phi,\phi)_{\Fc^2}
=
(W_+\phi,\phi)_+
&\leq
\sigma_+\norm{f+g}_+^2
\leq
2\sigma_+\norm{f}^2_+ + 2\sigma_+\norm{g}^2_+;
\\
(T_{V_-}\phi,\phi)_{\Fc^2}
=
(W_-\phi,\phi)_-
&\geq
\tau_-\norm{f+g}_-^2
\\
&\geq \tau_-(\norm{f}_-^2+\norm{g}_-^2-\epsilon\norm{f}_-^2 - \epsilon^{-1}\norm{g}_-^2).
\end{align*}
Combining the above estimates and using $\|f\|^2_+\le \varkappa\|f\|^2_-$, we obtain
\begin{gather*}
(T_{V}\phi,\phi)_{\Fc^2}
\leq
2\sigma_+ \norm{g}_+^2
+
\tau_-(\epsilon^{-1}  - 1)\norm{g}_-^2
-
(\tau_--2\varkappa\sigma_+ -\tau_- \epsilon)\norm{f}_-^2.
\end{gather*}
Now we suppose that $\varkappa$ and $\epsilon$ are chosen sufficiently small
so that $\tau_--2\varkappa\sigma_+-\tau_- \epsilon\geq0$.
Then
$$
(T_{V}\phi,\phi)_{\Fc^2}
\leq
2\sigma_+ \norm{g}_+^2
+
\tau_-(\epsilon^{-1}-1)\norm{g}_-^2
\leq
2\sigma_+\norm{g}_+^2+\tau_-\epsilon^{-1}\norm{g}_-^2.
$$
In this way  we have established the inequality
\begin{equation}
\frac{(T_V\phi,\phi)_{\Fc^2}}{\|\phi\|^2_{\Fc^2}}
\le
C\frac{\|g\|^2_++\|g\|^2_-}{\|g\|^2_{\Fc^2}},
\quad
\forall\phi=f+g\in\Lc_n
\label{c1}
\end{equation}
with $C=\max\{2\sigma_+,\tau_-\epsilon^{-1}\}$.

2.
Suppose that $r$ is chosen so large that
$\Omega_+\cup\Omega_-\subset D_r(0)$.
Then $\|g\|^2_++\|g\|^2_-\le \|g\|^2_{L^2(D_r(0))}$, and, therefore,
by Lemma~\ref{lma3.3}(i) and \eqref{c1},
\begin{equation*}
    \frac{(T_V\phi,\phi)_{\Fc^2}}{\|\phi\|^2_{\Fc^2}}
    \leq  
    e^{-n\log n}e^{\alpha n}, \quad \forall \phi\in\Lc_n
\end{equation*}
for all sufficiently large $n$. 
It follows now by the min-max principle that
\begin{equation*}
    N((e^{-n\log n}e^{\alpha n},\infty);T_V)
    \leq 
    \codim\Lc_n=N((0,\varkappa^{-1});A_n^+),
\end{equation*}
for all sufficiently large $n$,
as required.
\end{proof}

\subsection{Proof of Theorems~\ref{1:theorem1.1} and \ref{1:cor1.2} }
\begin{proof}[Proof of Theorem~\ref{1:theorem1.1}]
1.
Let us prove the asymptotics \eqref{1:RWestimate}.
From Lemmas  \ref{3.lemma31}, \ref{3.lemma32}, by interchanging
$\Omega_+$ and $\Omega_-$, and using \eqref{3.1} we obtain
\begin{gather}\label{4.4}
N((e^{-n\log n}e^{-\alpha n},\infty);-T_V)
\geq 
N((0,\varkappa);A_n^-)
=
N((\varkappa^{-1},\infty);A_n^+)
\\ 
\label{4.5}
N((e^{-n\log n}e^{\alpha n},\infty);-T_V)
\leq  
N((0,\varkappa^{-1});A_n^-)
=
N((\varkappa,\infty);A_n^+).
\end{gather}
Combining \eqref{4.1} and \eqref{4.4}, we obtain
\begin{multline}
N((e^{-n\log n}e^{-\alpha n},\infty);|T_V|)
=
N((e^{-n\log n}e^{-\alpha n},\infty);T_V)
+
N((e^{-n\log n}e^{-\alpha n},\infty);-T_V)
\\
\geq 
N((0,\varkappa);A_n^+)
+
N((\varkappa^{-1},\infty);A_n^+)
=
(n+1)- N([\varkappa,\varkappa^{-1}];A_n^+).
\label{4.6}
\end{multline}
In the same way, combining \eqref{4.2} with \eqref{4.5}, we get
\begin{multline}\label{4.7}
N((e^{-n\log n}e^{\alpha n},\infty);\abs{T_V})
\\
\leq 
N((\varkappa,\infty);A_n^+)
+
N((0,\varkappa^{-1});A_n^+)
=
(n+1)+N((\varkappa,\varkappa^{-1}),A_n^+).
\end{multline}
Estimates \eqref{4.6}, \eqref{4.7}, together with Lemma \ref{Lemma3.1}, yield
\begin{align*}
s_{n-m}(T_V)&\ge e^{-n\log n}e^{-\alpha n},
\\
s_{n+m}(T_V)&\le e^{-n\log n}e^{\alpha n},
\end{align*}
for all sufficiently large $n$ and some fixed $m$.
This proves \eqref{1:RWestimate}.

2.
Let us prove the `$+$' version of the asymptotic estimates \eqref{1:Theor1.1pm}.
In order to do this, we combine
Lemmas~\ref{Lemma3.1} and \ref{3:Prop.2.3} with Lemma~\ref{3.lemma31}.
We obtain that
$$
N((e^{-n\log n}e^{-\alpha n},\infty);{T_V})\geq\delta_+ n+o(n), \quad n\to\infty.
$$
The latter inequality can be re-written as
\begin{equation}
\lambda^+_{\delta_+ n+o(n)}(T_V)\ge e^{-n\log n}e^{-\alpha n},
\quad 
n\to\infty
\label{c2}
\end{equation}
which is equivalent to  the first inequality in \eqref{1:Theor1.1pm}.
Similarly, from Lemmas~\ref{Lemma3.1}, \ref{3:Prop.2.3} and \ref{3.lemma32},
we obtain
\begin{equation}
\lambda^+_{\Delta_+n+o(n)}(T_V)\le e^{-n\log n }e^{\alpha n},
\quad n\to\infty
\label{c3}
\end{equation}
which gives the second inequality in \eqref{1:Theor1.1pm}.
The `$-$' version of the asymptotic estimates \eqref{1:Theor1.1pm}
is obtained by applying the same reasoning to  $T_{-V}$.
\end{proof}

By simple variational considerations we can now show that the estimates
\eqref{1:Theor1.1pm} hold under considerably weaker conditions.
\begin{corollary}\label{Cor.3.6}
Let the symbol $V$ be a real bounded  function
with compact support.
Suppose that for some $\varepsilon>0$ there exists a closed set $K$
of positive measure such that $V(z)\ge \varepsilon$ on $K$
and $\Co(K)\cap \Co(\supp V_-)=\varnothing$.
Then for the \emph{positive} eigenvalues of $T_V$ the estimate
\begin{equation*}
    \lambda_n^+(T_V)\ge e^{-\beta n\log n}
\end{equation*}
holds true
for some $\beta<\infty$
for all sufficiently large $n$.
\end{corollary}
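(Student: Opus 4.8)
The plan is to reduce the situation to the one already covered by Theorem~\ref{1:theorem1.1}, or rather by its proof, via the min-max principle. Since $V(z)\ge\varepsilon$ on a closed set $K$ of positive measure, and since we may always shrink $K$ slightly, I would first choose inside $K$ a nice piece: pick a Lipschitz domain $\Omega_+$ (for instance a small disc, which is certainly Lipschitz) with $\overline{\Omega_+}\subset K$ and $\abs{\Omega_+}>0$. Similarly, $\supp V_-$ is a compact set, and I would enclose it in a Lipschitz domain $\Omega_-\supset\supp V_-$ which is still disjoint, in the convex-hull sense, from $\Omega_+$; this is possible because $\Co(K)\cap\Co(\supp V_-)=\varnothing$ and convex hulls of compact sets are compact, so there is a positive distance between them and a fortiori between their convex hulls, leaving room to thicken $\supp V_-$ slightly into a Lipschitz $\Omega_-$ with $\Co(\Omega_+)\cap\Co(\Omega_-)=\varnothing$.

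The key point is a one-sided comparison of quadratic forms. On $\Omega_+$ we have $V\ge\varepsilon$, hence $V\ge\varepsilon\chi_{\Omega_+}-\norm{V}_\infty\chi_{\Omega_-}-\norm{V}_\infty\chi_{E}$, where $E=\supp V\setminus(\Omega_+\cup\Omega_-)$ collects the remaining support. This last term is harmless: because $E$ is a compact set disjoint from $\overline{\Omega_+}$, one can absorb it exactly as in Theorem~\ref{2:PositivePeriph} and Lemma~\ref{3.lemma31}. Concretely, I would set $\widetilde V(z)=\varepsilon\chi_{\Omega_+}(z)-\norm{V}_\infty\chi_{\Omega_-}(z)$, which is a symbol of the form \eqref{1:symbol V} for the pair $\Omega_\pm$, and observe that $V\ge\widetilde V-\norm{V}_\infty\chi_E$. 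By the min-max principle,
\begin{equation}
\lambda^+_{n+m}(T_V)\ge\lambda^+_n(T_{\widetilde V})-\lambda^+_m(T_{\norm{V}_\infty\chi_E}),
\label{cor:comp}
\end{equation}
where the operator $T_{\norm{V}_\infty\chi_E}$ has a super-exponentially small eigenvalue sequence of the type \eqref{1:RWestimate}. For $\widetilde V$, the lower bound in \eqref{1:Theor1.1pm} (the `$+$' version) gives $\lambda^+_n(T_{\widetilde V})\ge\exp(-\beta' n\log n+o(n\log n))$ for some $\beta'<\infty$; in fact the proof of Theorem~\ref{1:theorem1.1}, via Lemma~\ref{3.lemma31} and Lemmas~\ref{Lemma3.1}, \ref{3:Prop.2.3}, already delivers this with $\delta_+>0$ controlling the index.

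Feeding these two facts into \eqref{cor:comp}, with $m$ fixed and $n\to\infty$, the term $\lambda^+_m(T_{\norm{V}_\infty\chi_E})$ can be made negligible relative to $\lambda^+_n(T_{\widetilde V})$ by choosing $m$ large enough depending on $n$ (say $m=m(n)\to\infty$ slowly), exactly as in the proof of Theorem~\ref{5:Teo2} where one balances $(n+m-1)^{-(n+m-1)}a^{n+m-1}$ against $m^{-m}b^m$; the point is that $\exp(-m\log m)$ beats $\exp(-\beta' n\log n)$ once $m\gg\beta' n$, which still leaves $n+m=O(n\log n)$, hence harmless in the exponent up to adjusting $\beta$. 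Re-indexing, this yields $\lambda^+_n(T_V)\ge e^{-\beta n\log n}$ for a suitable finite $\beta$ and all large $n$. The main obstacle, such as it is, is purely bookkeeping: making sure the ``leftover'' piece $E$ and the index shifts do not spoil the leading $n\log n$ rate, and checking that the Lipschitz domains $\Omega_\pm$ can genuinely be inserted with disjoint convex hulls given only the hypothesis on $K$ and $\supp V_-$; both are routine once the convex-hull separation is quantified.
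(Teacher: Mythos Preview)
Your overall strategy---build a comparison symbol of the form \eqref{1:symbol V} that lies below $V$ and invoke the lower bound from Theorem~\ref{1:theorem1.1}---is precisely the paper's approach, and your care in passing to genuinely Lipschitz domains $\Omega_\pm$ is a refinement the paper omits. But you then introduce an unnecessary detour that also contains an error.

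The leftover set $E$ is not needed. Because you chose $\Omega_-\supset\supp V_-$, outside $\Omega_-$ one has $V_-\equiv0$ and hence $V\ge0$. Checking the three regions separately ($\Omega_+$, $\Omega_-$, and their complement) gives $V\ge\widetilde V$ everywhere, so $T_V\ge T_{\widetilde V}$ and by min-max $\lambda_n^+(T_V)\ge\lambda_n^+(T_{\widetilde V})$ with no index shift at all. Theorem~\ref{1:theorem1.1} applied to $\widetilde V$ then finishes the proof immediately. This is exactly what the paper does: it sets $F=\varepsilon\chi_K+(\inf V)\chi_{\supp V_-}$, observes $F\le V$, and applies Theorem~\ref{1:theorem1.1} to $T_F$.

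Your balancing argument, had it been needed, does not actually work. First, the Weyl-type inequality you write has the indices the wrong way round: from $T_V+T_{\norm{V}_\infty\chi_E}\ge T_{\widetilde V}$ one gets $\lambda_{n-m+1}^+(T_V)\ge\lambda_n^+(T_{\widetilde V})-\lambda_m^+(T_{\norm{V}_\infty\chi_E})$, not your \eqref{cor:comp}. Second, even with the correct inequality the balancing fails: the lower bound for $T_{\widetilde V}$ from \eqref{1:Theor1.1pm} has exponent $-(1/\delta_+)n\log n$ with $1/\delta_+>1$, while the upper bound for the $\chi_E$ term has exponent $-m\log m$. No choice of $m$ makes the first term dominate the second while keeping the resulting index $O(n)$; the analogy with Theorem~\ref{5:Teo2} breaks because there one has a strict inequality $a>b$ of bases, absent here. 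Fortunately, as noted, none of this is needed once you drop $\chi_E$.
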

\begin{proof}
Let us compare $T_V$ with the Toeplitz operator $T_F$,
where $F(z)=\varepsilon$ on $K$, $F(z)=\inf V$ on $\supp V_-$, 
and $F(z)=0$ elsewhere.
Since $F\le V$, we have $T_F\le T_V$.
Applying Theorem~\ref{1:theorem1.1} to $T_F$, we get the required result
for $T_V$ by variational considerations.
\end{proof}

\begin{proof}[Proof of Theorem~\ref{1:cor1.2}]
By symmetry, $A_n^+$ and $A_n^-$ are unitarily equivalent.
Using this and \eqref{3.1}, we obtain
$$
N((0,1);A_n^+)=N((0,1);A_n^-)=N((1,\infty);A_n^+),
$$
and therefore, by Lemma~\ref{Lemma3.1},
$$
2N((0,1);A_n^\pm)
=
n-N(\{1\};A_n^+)
=
n+O(1), \quad n\to\infty,
$$
and so $N((0,1);A_n^\pm)=\frac{n}{2}+O(1)$.
From here, following the same reasoning as in the second part
of the proof of Theorem~\ref{1:theorem1.1}, we obtain the
required estimates \eqref{1:Cor.1}.
\end{proof}

\section{Applications to the perturbed Landau Hamiltonian}\label{sect.4}
\subsection{The Landau Hamiltonian}
The unperturbed Landau Hamiltonian $H_0$ is the 
Schr\"o\-dinger operator with a  constant homogeneous magnetic field $B$:
\begin{equation}\label{1:Landau}
H_0
=
\bigl(-i\tfrac{\partial }{\partial x_1}+\tfrac{B}2 x_2\bigr)^2
+
\bigl(-i\tfrac{\partial }{\partial x_2}-\tfrac{B}2 x_1\bigr)^2-B
\quad \text{ in }
L^2(\R^2,dx_1 dx_2).
\end{equation}
Here the constant term $-B$ is included
for normalization reasons.
In what follows we take $B=2$; the general case can always 
be reduced to this one by scaling. 
It is well known that the spectrum of $H_0$
consists of the eigenvalues 0, 2, 4,\dots of infinite
multiplicity; these eigenvalues are called the Landau
levels.

Next, let $V:\R^2\to\R$, $V(x)\to 0$ as $\abs{x}\to
\infty$. Consider the operator $H=H_0+V$; here $V$
stands for the operator of multiplication by the
function $V=V(x)$ and has the meaning of the
perturbing electric potential. As it is well known, 
the operator $V$ is $H_0$-form compact and so
the essential spectra of $H_0$ and $H$ coincide.  
Thus, the 
spectrum of $H$ consists of eigenvalues, and the only
possible points of accumulation of these eigenvalues
are the Landau levels. One can say that the
perturbation `splits' the Landau levels into
`clusters' of eigenvalues.
Here we discuss the rate of accumulation of the eigenvalues
in these clusters towards the
Landau levels. For simplicity, we discuss only
the case of the lowest Landau level 0.
Without going into details we mention that combining the technique
of \cite{FilPush} with the technique of this paper, it is not difficult to obtain
similar results for the splitting of the higher Landau levels.

\subsection{Reduction to $P_0 V^\pm P_0$}
For $n\geq0$, let $P_n$ be the spectral projection of $H_0$ corresponding 
to the Landau level $2n$, i.e. $P_n=E(\{2n\};H_0)$.
It is common wisdom that the asymptotic behavior of the eigenvalues of
$H$  in the cluster around the Landau
level $2n$ is determined by the asymptotics of the eigenvalues
of the operator $P_nVP_n$. 
Results of this kind have been widely used in the case of a sign-definite $V$,
see  e.g. \cite{Raikov1,IwaTam1,RaiWar,MelRoz,FilPush,RozTa}.
Our situation, with $V$ changing sign, is somewhat more complicated.
The Theorem to follow expresses the above relation in exact terms.
As mentioned above, we only discuss the case $n=0$.
We fix  $\e>0$ and denote $V^{\pm}_\epsilon=V\pm \e |V|$.
\begin{theorem}\label{theorem5.1} 
(i) There exists a constant $m$ (which may depend on $V$ and on $\e$) 
such that for  any $\lambda<0$,
\begin{equation}\label{5.1}
N((-\infty,\lambda);P_0VP_0)
\le 
N((-\infty,\lambda);H)\le  N((-\infty,\lambda);P_0V_\e^-P_0)+m.
\end{equation}
(ii) For any $a\in (0,2)$, there exists a constant $m$ (which may depend on 
$V$, $\e$ and on $a$), such that for any $\lambda\in(0,a)$
\begin{equation}\label{5.2}
N((\lambda,\infty);P_0V_\e^-P_0)-m
\leq N((\lambda,a),H)\le N((\lambda,\infty);P_0V_\e^+P_0)+m.
\end{equation}
\end{theorem}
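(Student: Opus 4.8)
The plan is to reduce the spectral analysis of $H=H_0+V$ near the Landau level $0$ to that of the Toeplitz-type operator $P_0VP_0$ by means of a Birman--Schwinger / Feshbach decomposition, exploiting the spectral gap $(0,2)$ separating the lowest Landau level from the rest of $\sigma(H_0)$. First I would decompose $L^2(\R^2)=P_0L^2\oplus Q_0L^2$ with $Q_0=I-P_0$, and write the eigenvalue equation $Hu=\lambda u$ for $\lambda$ in the gap $(-\delta,2-\delta)$ as a block system. Since $H_0$ restricted to $Q_0L^2$ has spectrum in $[2,\infty)$, the operator $Q_0(H_0-\lambda)Q_0$ is invertible and bounded below by $(2-\lambda)^{-1}$ on that range; eliminating the $Q_0$-component yields the effective operator on $P_0L^2$
\begin{equation*}
\Lambda(\lambda)=P_0VP_0+P_0VQ_0\bigl(Q_0(H_0-\lambda+V)Q_0\bigr)^{-1}Q_0VP_0,
\end{equation*}
so that $\dim\Ker(H-\lambda)=\dim\Ker(\Lambda(\lambda)-\lambda)$ for $\lambda$ in the gap, together with the fact that on the $Q_0$ side $H$ contributes only finitely many eigenvalues in the gap (this finite number, controlled by $\|V\|_\infty$ and the size of the gap, will be the source of the constant $m$). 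This is standard; the only care needed is that $V$ is merely $H_0$-form compact, so one works with forms and the bounded operator $|V|^{1/2}(H_0-\lambda)^{-1}|V|^{1/2}$ rather than with $V$ directly.

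Next I would estimate the correction term. The key point is that the second summand $R(\lambda)=P_0VQ_0(Q_0(H_0-\lambda+V)Q_0)^{-1}Q_0VP_0$ is a \emph{compact} operator on $P_0L^2$, and moreover its singular values decay fast: since $P_0$ is the projection onto the lowest Landau level, whose kernel is the Bargmann space, and $V$ has compact support, the operators $P_0VQ_0$ inherit the super-exponential decay of singular values from the general theory (the same mechanism behind \eqref{1:RWestimate}); in particular $R(\lambda)$ is trace class with norm and singular values bounded uniformly for $\lambda$ in a compact subinterval of the gap. Then I would invoke the Weyl-type inequality for eigenvalue counting functions of sums of self-adjoint operators: for any $\lambda$,
\begin{equation*}
N((\mu_2,\infty);A+B)\le N((\mu_1,\infty);A)+N((\mu_2-\mu_1,\infty);B),
\end{equation*}
and its negative-spectrum analogue. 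Applying this with $A=P_0VP_0$, $B=R(\lambda)$ and with $\mu_1,\mu_2$ chosen comparable — using that $N((\e'|\lambda|,\infty);R(\lambda))$ and $N((-\infty,-\e'|\lambda|);R(\lambda))$ are bounded by a constant $m$ independent of $\lambda$, because $R$ decays so fast that only finitely many of its eigenvalues exceed any fixed threshold scaled by the arbitrarily small parameter $\e$ — yields both sides of \eqref{5.1} and \eqref{5.2} after absorbing the $\e|V|$ into $V$ to pass from $P_0VP_0$ to $P_0V_\e^\pm P_0$.

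Concretely, for part (i): the lower bound $N((-\infty,\lambda);P_0VP_0)\le N((-\infty,\lambda);H)$ follows because $P_0L^2$ is an invariant-type test subspace on which the form of $H$ is close to that of $P_0VP_0$; more precisely one uses the min-max principle together with the Feshbach identity, noting that adding the positive-definite-on-its-range correction cannot decrease the negative count by more than the rank-type error $m$, and in the "clean" direction one simply restricts the quadratic form of $H$ to $P_0L^2\cap(\text{range of the spectral projection of }P_0VP_0)$ and checks it stays negative after the harmless $Q_0$ correction. For the upper bound one uses $\Lambda(\lambda)\le P_0VP_0+\e|V|_{P_0}+ (\text{finite rank}) \le P_0V_\e^-P_0 + (\text{finite rank})$ on the relevant spectral subspace, valid once $R(\lambda)$ is split into its part of norm $\le\e\inf|V|$ on $\supp V$ plus a finite-rank remainder. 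Part (ii) is the analogous statement in the gap interval $(0,a)$, $a<2$, where one must additionally discard the (finitely many, by the same compactness) eigenvalues of $H$ lying in $(a,2)$; this is where the dependence of $m$ on $a$ enters. The main obstacle, and the part deserving the most care, is making the error term genuinely \emph{uniform in $\lambda$} across the half-line $\lambda<0$ (resp. the interval $(0,a)$): near $\lambda\to-\infty$ one must check that $(Q_0(H_0-\lambda+V)Q_0)^{-1}$ stays bounded and that its contribution to the negative count of $\Lambda(\lambda)-\lambda$ does not grow — here the strict positivity of the gap and the boundedness of $V$ give $\|R(\lambda)\|=O(|\lambda|^{-1})\to0$, which is more than enough, so the genuinely delicate regime is $\lambda$ near $0$, handled by the fast decay of $s_n(R(\lambda))$ and the freedom in the parameter $\e$.
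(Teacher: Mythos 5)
Your proposal takes a genuinely different route from the paper, and the difference is instructive, but there is a real gap at its core.

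The paper avoids the Feshbach/Schur-complement machinery entirely. Its key step (Lemma~\ref{Lemma5.2}, attributed to an idea of Iwatsuka--Tamura) is the elementary two-sided operator inequality
\[
H_\e^- \;\le\; H \;\le\; H_\e^+, \qquad H_\e^\pm \;=\; H_0 + P_0(V\pm\e|V|)P_0 + P_0^\bot(V\pm\e^{-1}|V|)P_0^\bot,
\]
obtained by a single Cauchy--Schwarz estimate on the cross term $2|(V\psi_0,\psi_1)|$. The crucial structural point is that $H_\e^\pm$ is \emph{block diagonal} with respect to $P_0\oplus P_0^\bot$, so the counting function splits as a sum, the $P_0^\bot$ block contributes only a fixed finite constant $m$ (its essential spectrum starts at $2$), and the $P_0$ block is exactly $P_0V_\e^\pm P_0$. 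No $\lambda$-dependent effective operator ever appears; the $\e$ parameter is simply the Young's-inequality weight in that single Cauchy--Schwarz step, not something ``absorbed'' afterwards. For part~(i) this is finished off by min-max; for part~(ii) the interval $(\lambda,a)$ lies inside the spectral gap, so min-max alone doesn't apply, and the paper invokes the index $\Xi(\lambda;H,H_0)$ for a pair of spectral projections, using its monotonicity \eqref{5.4} and the telescoping identity \eqref{5.5}, which are borrowed from \cite{Pushn}.

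The gap in your argument is precisely at the step you flag as delicate but do not resolve. The Feshbach identity gives, for each fixed $\lambda$ in the gap, the equality $\dim\Ker(H-\lambda)=\dim\Ker(\Lambda(\lambda)-\lambda)$ with a $\lambda$-dependent family $\Lambda(\lambda)$. This is a \emph{nonlinear} eigenvalue problem: to convert it into a statement about $N((\lambda,a);H)$ versus $N((\lambda,\infty);P_0V_\e^\pm P_0)$ one needs either a monotonicity argument for the family $\lambda\mapsto\Lambda(\lambda)-\lambda$ (a Birman--Schwinger-type scheme) or something like the index $\Xi$. Simply invoking the Weyl inequality $N((\mu_2,\infty);A+B)\le N((\mu_1,\infty);A)+N((\mu_2-\mu_1,\infty);B)$ with $A=P_0VP_0$ and $B=R(\lambda)$ compares counting functions of two \emph{fixed} operators at a single $\lambda$; it does not count how many $\lambda$ in an interval make $\Lambda(\lambda)-\lambda$ singular. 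Relatedly, your claim that $R(\lambda)$ is ``positive-definite-on-its-range'' so that ``adding it cannot decrease the negative count by more than a rank-type error $m$'' is not justified: $R(\lambda)$ is compact but neither finite rank nor sign-definite in general (its sign depends on the sign of $Q_0(H_0-\lambda+V)Q_0$ and the interplay with $V$), and replacing it by a small-norm part plus a finite-rank part does not by itself give a $\lambda$-\emph{uniform} bound on the error, since both pieces depend on $\lambda$. The super-exponential decay of singular values you cite is a property of the \emph{asymptotics} of $s_n(P_0 V P_0)$ and is not what controls the finite shift $m$; what controls $m$ in the paper is the gap $(0,2)$ in $\sigma(P_0^\bot H_0 P_0^\bot)$ together with relative compactness. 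In short: your decomposition is a sensible starting point and would likely work if pushed through carefully, but it requires substantially more machinery (a Birman--Schwinger monotonicity argument or the index $\Xi$) than you supply, whereas the paper's sandwich inequality sidesteps the $\lambda$-dependence entirely and reduces everything to a one-line Cauchy--Schwarz.
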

Similar  results were obtained by
different methods  in  \cite{Raikov1,RaiWar,MelRoz}.

The key step in the proof of Theorem \ref{theorem5.1} is the following lemma. 
The idea of this lemma is borrowed from \cite{IwaTam1}. 
\begin{lemma}\label{Lemma5.2}
The following operator inequalities hold in the quadratic form sense:
\begin{equation*}
H_\e^-\le H\le H_\e^+,
\end{equation*}
where
\begin{equation*}
H_\e^\pm=H_0+P_0(V\pm\e|V|)P_0+P_0^\bot(V\pm\e^{-1} |V|)P_0^\bot,
\end{equation*}
and $P_0^\bot=I-P_0$.
\end{lemma}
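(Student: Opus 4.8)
The plan is to establish the two operator inequalities $H_\e^-\le H\le H_\e^+$ in the quadratic form sense by means of a pointwise (in the sense of quadratic forms) estimate for the perturbation $V$, decomposed according to the splitting $I=P_0+P_0^\bot$. First I would write, for any $u$ in the form domain of $H_0$, the decomposition $u=u_0+u_\bot$ with $u_0=P_0u$ and $u_\bot=P_0^\bot u$, and expand the quadratic form of $V$:
\begin{equation*}
(Vu,u)=(Vu_0,u_0)+(Vu_\bot,u_\bot)+2\re(Vu_0,u_\bot).
\end{equation*}
The only problematic term is the cross term $2\re(Vu_0,u_\bot)$; the diagonal terms $(Vu_0,u_0)=(P_0VP_0u,u)$ and $(Vu_\bot,u_\bot)=(P_0^\bot VP_0^\bot u,u)$ are exactly what appears in $H_\e^\pm$. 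The key step is to absorb the cross term into the diagonal ones. Since $|V|\ge0$, the function $|V|$ has a ``square root'' in the form sense: $|(Vf,g)|\le (|V|f,f)^{1/2}(|V|g,g)^{1/2}$ by Cauchy--Schwarz applied to the non-negative form $(|V|\cdot,\cdot)$. Hence, for any $\e>0$,
\begin{equation*}
2|\re(Vu_0,u_\bot)|
\le
2(|V|u_0,u_0)^{1/2}(|V|u_\bot,u_\bot)^{1/2}
\le
\e(|V|u_0,u_0)+\e^{-1}(|V|u_\bot,u_\bot),
\end{equation*}
by the elementary inequality $2ab\le \e a^2+\e^{-1}b^2$.

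Combining these, I obtain
\begin{equation*}
(Vu,u)
\le
\bigl((V+\e|V|)u_0,u_0\bigr)+\bigl((V+\e^{-1}|V|)u_\bot,u_\bot\bigr)
=
\bigl(P_0(V+\e|V|)P_0 u,u\bigr)+\bigl(P_0^\bot(V+\e^{-1}|V|)P_0^\bot u,u\bigr),
\end{equation*}
and, using the reversed estimate $2\re(Vu_0,u_\bot)\ge -\e(|V|u_0,u_0)-\e^{-1}(|V|u_\bot,u_\bot)$,
\begin{equation*}
(Vu,u)
\ge
\bigl(P_0(V-\e|V|)P_0 u,u\bigr)+\bigl(P_0^\bot(V-\e^{-1}|V|)P_0^\bot u,u\bigr).
\end{equation*}
Adding the common term $(H_0u,u)$ to all sides and recalling the definition of $H_\e^\pm$ and of $H=H_0+V$ in the form sense (legitimate since $V$ is bounded, hence $H_0$-form bounded with relative bound $0$ and in fact $H_0$-form compact as recalled in the text), this is precisely the asserted chain $H_\e^-\le H\le H_\e^+$.

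The main point to be careful about — rather than a deep obstacle — is the bookkeeping of form domains: all forms here are considered on the form domain of $H_0$, and since $P_0$ and $P_0^\bot$ are bounded orthogonal projections commuting with $H_0$, both $u_0$ and $u_\bot$ remain in that form domain, so every term above is well defined. One should also note that the bounded multiplication operator $V$ does not commute with $P_0$, which is exactly why the cross term is genuinely present and must be controlled; it is this non-commutativity that forces the appearance of the two different constants $\e$ and $\e^{-1}$ in $H_\e^\pm$. No regularity or decay of $V$ beyond boundedness is used, and the argument is symmetric in the roles of $u_0$ and $u_\bot$, which is the conceptual content of the idea attributed to \cite{IwaTam1}.
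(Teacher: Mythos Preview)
Your proof is correct and follows essentially the same route as the paper: decompose $u=P_0u+P_0^\bot u$, expand the quadratic form of $V$, and control the cross term via Cauchy--Schwarz and the arithmetic--geometric mean inequality to absorb it into the diagonal blocks with weights $\e$ and $\e^{-1}$. The paper's version writes the Cauchy--Schwarz step using $\||V|^{1/2}\psi_0\|\,\||V|^{1/2}\psi_1\|$ rather than your form-level phrasing, but the content is identical.
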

\begin{proof}
This is a direct calculation. 
Let $\psi\in L^2(\R^2), \psi_0=P_0\psi_0, \ \psi_1=P_0^\bot\psi_0$. 
Then
\begin{align*}
(V\psi,\psi)
&\leq
(V\psi_0,\p_0)+(V\p_1,\p_1)+2|(V\p_0,\p_1)|;
\\
2|(V\p_0,\p_1)|
&\leq 
2\norm{\abs{V}^{1/2}\psi_0}\norm{\abs{V}^{1/2}\psi_1}
\le 
\e \||V|^{\frac12}\p_0\|^2
+
\e^{-1}\||V|^{\frac12}\p_1\|^2
\\
&=\e(|V|\p_0,\p_0)+\e^{-1}(|V|\p_1,\p_1),
\end{align*}
and this proves the upper bound. The lower bound is proved in the same way.
 \end{proof}
 \begin{proof}[Proof of Theorem~\ref{theorem5.1}(i)]
 The lower bound in \eqref{5.1} follows trivially from the min-max principle:
 \begin{equation*}
N((-\infty,\lambda);H)
\geq 
N((-\infty,\lambda);P_0HP_0)
=
N((-\infty,\lambda);P_0VP_0),
\end{equation*}
since $P_0H_0P_0=0.$

To prove the upper estimate, we note that by the min-max principle,
\begin{equation*}
H_\e^-\leq H\quad\Rightarrow\quad
N((-\infty,\lambda);H)
\leq  
N((-\infty,\lambda);H_\e^-),
\end{equation*}
and
\begin{multline*}
N((-\infty,\lambda);H_\e^-)
=
N((-\infty,\lambda);P_0V_\e^-P_0)
+
N((-\infty,\lambda);P_0^\perp(H_0+V-\e^{-1}|V|)P_0^\bot)
\\
\leq 
N((-\infty,\lambda);P_0V_\e^-P_0)+m,
\end{multline*}
where 
$m=N((-\infty,0);P_0^\perp(H_0+V-\e^{-1}|V|)P_0^\bot)$. 
Note that the quantity $m$ is finite. 
Indeed,  the bottom of the essential spectrum of the operator
$P_0^\bot H_0P_0^\bot|_{\Ran P_0^\bot}$ is $2$, and 
$P_0^\bot(V-\e^{-1}|V|)P_0^\bot$ is a relatively compact perturbation of 
this operator. 
Thus, the  operator 
$P_0^\perp(H_0+V-\e^{-1}|V|)P_0^\bot$ 
has finitely many negative eigenvalues.
 \end{proof}

In order to prove the second part of Theorem \ref{theorem5.1}, 
we will use some machinery from \cite{Pushn}. 
The argument below has a general operator theoretic nature and 
applies to any pair of self-adjoint operators $H_0$, $H$ which 
are semi-bounded from below and such that the difference
$H-H_0$ is $H_0$-form compact. 
Under this assumption, the essential spectra of $H_0$ and $H$
coincide.
Consider
the difference between the eigenvalue counting functions of $H$
and $H_0$:
\begin{equation}
N((-\infty,\lambda);H_0)-N((-\infty,\lambda);H).
\label{diff}
\end{equation}
Of course, this difference only makes sense for $\lambda<\inf\sigma_\ess(H_0)$; 
if the interval $(-\infty,\lambda)$ contains points of the essential spectrum
of $H_0$ and $H$, then we
formally obtain $\infty-\infty$. 
However, there is a natural regularisation of the difference \eqref{diff} which 
is well defined for all $\lambda\in\R\setminus\sigma_\ess(H_0)$. 
This regularisation is given by 
$$
\Xi(\lambda;H,H_0)=\Index(E((-\infty,\lambda);H_0),E((-\infty,\lambda);H)),
$$
where the r.h.s. is the Fredholm index of a pair of projections
(see \cite{ASS} for the definition and a survey of the index of a pair of projections).
The index $\Xi(\lambda;H,H_0)$ appeared in spectral theory in 
various guises, mostly in connection with the spectral shift function 
theory, see e.g. \cite{GNM,GM,BPR,Hempel}. It was systematically 
surveyed and studied in \cite{Pushn}. The properties of $\Xi(\lambda;H,H_0)$ 
relevant to us are monotonicity,
\begin{equation}\label{5.4}
H^-\le H\le H^+
\quad\Rightarrow\quad 
\Xi(\lambda;H^-,H_0)
\leq
\Xi(\lambda;H,H_0)
\leq
\Xi(\lambda;H^+,H_0),
\end{equation}
and the connection with the eigenvalue counting function,
\begin{equation}\label{5.5}
\Xi(\lambda_1;H,H_0)-\Xi(\lambda_2;H,H_0)
=
N([\lambda_1,\lambda_2);H)-N([\lambda_1,\lambda_2);H_0),
\end{equation}
for any  interval $[\lambda_1,\lambda_2]$ which does not contain points of essential 
spectrum of $H_0$ and $H$.
In particular, taking $\lambda_1=-\infty$, we see that $\Xi(\lambda;H,H_0)$ 
coincides with the difference \eqref{diff} whenever the latter makes sense.
\begin{proof}[Proof of Theorem~\ref{theorem5.1}(ii)]
Let us prove the upper bound in \eqref{5.2}. 
Using \eqref{5.4} and \eqref{5.5}, we obtain
\begin{gather*}
N([\lambda,a);H)= \Xi(\lambda;H,H_0)-\Xi(a;H,H_0)
\leq 
\Xi(\lambda;H_\e^+,H_0)-\Xi(a;H,H_0)
\\
= 
N([\lambda,a);H_\e^+)+\Xi(a;H_\e^+,H_0)-\Xi(a;H,H_0)=N([\lambda,a);H_\e^+)+k_+,
\end{gather*}
for any $\lambda\in(0,a)$, where $k_+$ is independent of $\lambda$.  
It follows that
\begin{equation}\label{5.6}
N((\lambda,a);H)
\leq 
N((\lambda,a);H_\e^+)+k_+, 
\quad\forall \lambda\in(0,a).
\end{equation}
Next,
\begin{multline}\label{5.7}
N((\lambda,a);H_\e^+)
=
N((\lambda,a);P_0V_\e^+P_0)
+
N((\lambda,a);P_0^\bot(H_0+V_\e^+) P_0^\bot)
\\
\leq 
N((\lambda,a);P_0V_\e^+P_0)+N((0,a);P_0^\bot(H_0+V_\e^+) P_0^\bot)
\leq
N((\lambda,\infty);P_0V_\e^+P_0)+l_+,
\end{multline}
where $l_+=N((0,a);P_0^\bot(H_0+V_\e^+) P_0^\bot)$.
Combining \eqref{5.6} and \eqref{5.7}, we obtain the upper bound 
in \eqref{5.2} with $m= l_+ + k_+$.

Next, in order to prove the lower bound in \eqref{5.2},
similarly to \eqref{5.6}, \eqref{5.7}, we obtain
\begin{equation*}
    N((\lambda,a);H)\ge N((\lambda,a);H_\e^-)-k_-, 
    \quad \forall \lambda\in(0,a),
\end{equation*}
for some $k_-<\infty$, and
\begin{multline*}
N((\lambda,a);H_\e^-)
=
N((\lambda,a);P_0V_\e^-P_0)+N((\lambda,a);P_0^\bot(H_0+V_\e^-) P_0^\bot)
\\
\ge 
N((\lambda,a);P_0V_\e^-P_0)
= 
N((\lambda,\infty);P_0V_\e^-P_0)- N([a,\infty);P_0V_\e^-P_0).
\end{multline*}
Combining the last two estimates, 
we  obtain the lower bound in \eqref{5.2} with 
$m=k_-+N([a,\infty);P_0V_\e^-P_0)$.
\end{proof}

\subsection{Connection between $P_0 V P_0$ and $T_V$}
Recall (see e.g. \cite[Section~4]{FilPush})
that the operator $P_0VP_0$ is unitarily equivalent to the
Toeplitz operator $T_V$; in particular, 
\begin{equation}
N((\lambda,\infty);\pm T_V)=N((\lambda,\infty);\pm P_0VP_0),
\quad \forall \lambda>0.
\label{unitary}
\end{equation}
Thus, we can apply the results of the previous Sections
to the study of the splitting of the lowest Landau level.
Suppose that $V$ is of the form \eqref{1:symbol V},
where the closed convex hulls of the Lipschitz domains $\Omega_-$ and $\Omega_+$ are
disjoint and
the functions $v_\pm>0$ are bounded and separated from zero.
Then by choosing $\e>0$ sufficiently small, 
we can ensure that $V_\e^\pm=V\pm\e|V|$ have the form
\begin{equation*}
V^\pm_\e=v_+^{(\pm)}\chi_{\Omega_+}-v_-^{(\pm)}\chi_{\Omega_-}
\end{equation*}
with  $v_+^{(\pm)}$, $v_-^{(\pm)}$ also
bounded and separated from zero. 
Thus,  Theorem \ref{1:theorem1.1} applies to $T_{V^\pm_\e}$. 
Combining this with \eqref{unitary} and 
using Theorem~\ref{theorem5.1}, 
we arrive at the following result.
\begin{theorem}
Under the above assumptions on the potential $V$,
the lowest Landau level necessarily generates two infinite sequences
of eigenvalues of $H$ converging to zero.
For
any $a\in(0,2)$ we have
\begin{equation}
\label{Th.4.3.1}
N((-\infty,-\lambda);H)+N((\lambda,a);H)
=
\frac{|\log \lambda |}{\log|\log \lambda |}(1+o(1)), \quad \lambda\to +0.
\end{equation}
For any $a\in(0,2)$ and some $c>0$ we have
\begin{equation}
\label{Th.4.3.pm}
N((-\infty,-\lambda);H)
\geq
c\frac{|\log \lambda|}{\log|\log \lambda|},
\quad
N((\lambda,a);H)
\geq
c\frac{\abs{\log \lambda}}{\log\abs{\log \lambda}}
\end{equation}
for all sufficiently small $\lambda>0$.
\end{theorem}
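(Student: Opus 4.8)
The plan is to assemble the final theorem from three ingredients already in place: Theorem~\ref{theorem5.1}, the unitary equivalence \eqref{unitary}, and Theorem~\ref{1:theorem1.1} (together with its corollaries \eqref{1:RWN}, \eqref{1:RWNpm}) applied to the Toeplitz operators $T_{V_\e^\pm}$. First I would fix $\e>0$ small enough that both $V_\e^\pm=V\pm\e|V|$ retain the structural form \eqref{1:symbol V}, i.e. $V_\e^\pm=v_+^{(\pm)}\chi_{\Omega_+}-v_-^{(\pm)}\chi_{\Omega_-}$ with all four coefficient functions bounded and bounded away from zero on their respective domains; this is possible because $\e|V|$ is a small relative perturbation of each $v_\pm$, and the supports and their convex hulls are unchanged. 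Consequently the pair $\Omega_\pm$ still satisfies \eqref{1:disjoint}, so Theorem~\ref{1:theorem1.1} applies verbatim to $T_{V_\e^+}$ and to $T_{V_\e^-}$, giving the singular-value asymptotics \eqref{1:RWestimate} and hence, via \eqref{1:RWN}, the counting-function asymptotics $N((\lambda,\infty);|T_{V_\e^\pm}|)=\frac{|\log\lambda|}{\log|\log\lambda|}(1+o(1))$; the one-sided estimates \eqref{1:RWNpm} similarly hold with strictly positive constants $\delta_\pm$ for $\pm T_{V_\e^\pm}$.

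Next I would translate these statements to $P_0 V_\e^\pm P_0$ using \eqref{unitary}, which says $N((\lambda,\infty);\pm P_0 V_\e^\pm P_0)=N((\lambda,\infty);\pm T_{V_\e^\pm})$, and analogously $N((\lambda,\infty);P_0 V_\e^\pm P_0)=N((\lambda,\infty);T_{V_\e^\pm})$ etc. Then I feed these into Theorem~\ref{theorem5.1}. For the equality \eqref{Th.4.3.1}: from \eqref{5.1} the negative-eigenvalue count $N((-\infty,-\lambda);H)$ is sandwiched between $N((-\infty,-\lambda);P_0VP_0)=N((\lambda,\infty);-T_V)$ and $N((\lambda,\infty);-T_{V_\e^-})+m$, while from \eqref{5.2} the count $N((\lambda,a);H)$ is sandwiched between $N((\lambda,\infty);P_0V_\e^-P_0)-m=N((\lambda,\infty);T_{V_\e^-})-m$ and $N((\lambda,\infty);P_0V_\e^+P_0)+m=N((\lambda,\infty);T_{V_\e^+})+m$. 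Adding the two inequalities, the sum $N((-\infty,-\lambda);H)+N((\lambda,a);H)$ is bounded above by $N((\lambda,\infty);-T_{V_\e^-})+N((\lambda,\infty);T_{V_\e^+})+2m=N((\lambda,\infty);|T_{V_\e^+}|)+N((\lambda,\infty);-T_{V_\e^-})+2m$; here I must be a little careful, since the upper bound for the positive part uses $V_\e^+$ and the upper bound for the negative part uses $V_\e^-$. The cleanest route is to observe that $N((\lambda,\infty);-T_{V_\e^-})\le N((\lambda,\infty);|T_{V_\e^-}|)$ and $N((\lambda,\infty);T_{V_\e^+})\le N((\lambda,\infty);|T_{V_\e^+}|)$, each of which equals $\frac{|\log\lambda|}{\log|\log\lambda|}(1+o(1))$ by \eqref{1:RWN}; so the upper bound is $\big(1+o(1)+o(1)\big)\frac{|\log\lambda|}{\log|\log\lambda|}$, i.e. of order $2\frac{|\log\lambda|}{\log|\log\lambda|}$ — which is \emph{twice} what is claimed.

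This is the point where the argument needs genuine care, and I expect it to be the main obstacle: a crude addition of the two halves of \eqref{5.1}–\eqref{5.2} loses a factor of $2$. The fix is that the positive and negative eigenvalue clusters should be compared against the \emph{same} operator so that their counts add up rather than doubling. Concretely, for a fixed small $\e$ one uses the single intermediate operator $H_\e^+$ (resp.\ $H_\e^-$) from Lemma~\ref{Lemma5.2}, whose $P_0$-block is $P_0 V_\e^\pm P_0 \cong T_{V_\e^\pm}$ and whose $P_0^\bot$-block contributes only finitely many eigenvalues in any compact subinterval of $(-\infty,2)$. Then $N((-\infty,-\lambda);H)+N((\lambda,a);H)$ is squeezed between $N((-\infty,-\lambda);H_\e^-)+N((\lambda,a);H_\e^-)-O(1)$ and $N((-\infty,-\lambda);H_\e^+)+N((\lambda,a);H_\e^+)+O(1)$, and for each fixed sign the two terms combine to $N((-\infty,-\lambda)\cup(\lambda,\infty);P_0V_\e^\pm P_0)+O(1)=N((\lambda,\infty);|T_{V_\e^\pm}|)+O(1)$, which by \eqref{1:RWN} is $\frac{|\log\lambda|}{\log|\log\lambda|}(1+o(1))$. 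Since $V_\e^\pm\to V$ as $\e\to0$ this pins down the leading term independently of $\e$, giving \eqref{Th.4.3.1}. Finally, for the lower bounds \eqref{Th.4.3.pm} I would use only the cheap halves: from the lower bound in \eqref{5.1}, $N((-\infty,-\lambda);H)\ge N((\lambda,\infty);-T_V)$, and from the lower bound in \eqref{5.2}, $N((\lambda,a);H)\ge N((\lambda,\infty);T_{V_\e^-})-m$; applying \eqref{1:RWNpm} to $-T_V$ and to $T_{V_\e^-}$ respectively yields each count bounded below by $c\frac{|\log\lambda|}{\log|\log\lambda|}$ with $c=\min\{\delta_-^{(V)},\delta_+^{(V_\e^-)}\}/2>0$ for all sufficiently small $\lambda$, and in particular both sequences of eigenvalues are infinite. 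The additive constant $m$ is harmless since $\frac{|\log\lambda|}{\log|\log\lambda|}\to\infty$.
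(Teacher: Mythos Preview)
Your overall plan—combine Theorem~\ref{theorem5.1}, the unitary equivalence \eqref{unitary}, and Theorem~\ref{1:theorem1.1} applied to $T_{V_\e^\pm}$—is exactly what the paper indicates (the paper gives no further details). Your treatment of the one-sided lower bounds \eqref{Th.4.3.pm} is correct: the lower bounds in \eqref{5.1} and \eqref{5.2} together with \eqref{1:RWNpm} give the claim immediately, with $c$ any number below $\min\{\delta_-,\delta_+\}>0$.

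The difficulty you flag for \eqref{Th.4.3.1} is real: adding the upper bounds from \eqref{5.1} and \eqref{5.2} yields
\[
N((-\infty,-\lambda);H)+N((\lambda,a);H)\le N((\lambda,\infty);-T_{V_\e^-})+N((\lambda,\infty);T_{V_\e^+})+O(1),
\]
and the right-hand side is only controlled by $(\Delta_-+\Delta_+)\frac{|\log\lambda|}{\log|\log\lambda|}$, not by the main term itself (recall $\Delta_-+\Delta_+=2-(\delta_++\delta_-)\ge 1$ with equality only when $\delta_\pm=\Delta_\pm$). So the sharp asymptotic does \emph{not} drop out of Theorem~\ref{theorem5.1} as stated.

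However, your proposed repair is not justified. You assert that the sum $N((-\infty,-\lambda);H)+N((\lambda,a);H)$ is squeezed, up to $O(1)$, between the corresponding sums for $H_\e^-$ and $H_\e^+$. But the operator ordering $H\le H_\e^+$ pulls the two pieces in \emph{opposite} directions: it gives $N((-\infty,-\lambda);H)\ge N((-\infty,-\lambda);H_\e^+)$ (min--max below the essential spectrum) while giving $N((\lambda,a);H)\le N((\lambda,a);H_\e^+)+O(1)$ (via the $\Xi$-argument). To obtain your upper sandwich you would need
\[
N((-\infty,-\lambda);H)-N((-\infty,-\lambda);H_\e^+)=O(1)\quad(\lambda\to 0+),
\]
i.e.\ that $\Xi(-\lambda;H,H_\e^+)$ stays bounded as $-\lambda\uparrow 0$. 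Nothing in the paper provides this: the identity \eqref{5.5} is unavailable across the Landau level $0$, and the only control one has is $N((-\infty,-\lambda);H)\le N((\lambda,\infty);-T_{V_\e^-})+m$ versus $N((-\infty,-\lambda);H_\e^+)=N((\lambda,\infty);-T_{V_\e^+})$, whose difference can be as large as $(\Delta_--\delta_-)\frac{|\log\lambda|}{\log|\log\lambda|}$. The same obstruction blocks the lower sandwich with $H_\e^-$. Thus your fix, as written, has a genuine gap; it reduces the problem to a boundedness statement for $\Xi$ near the threshold which is neither proved nor obviously true. (The paper itself gives no argument here either, so this subtlety is glossed over in the text.)
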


Following the same pattern, one can also apply
Theorems~\ref{5:Teo2}, \ref{Theorem2G} and Corollary~\ref{Cor.3.6}
to the analysis of the eigenvalues of $H$.

\section*{Acknowledgements}
A.P. is grateful to The Swedish Royal Academy of Sciences 
for the financial support  and to the 
Department of Mathematics, Chalmers University of Technology,
for hospitality. 
G.R. is grateful to the London Mathematical Society for the 
financial support (Scheme 2 grant no. 2804)
and to the Department of Mathematics, King's College London
for hospitality.
Both authors are grateful to Nikolai Filonov for reading the manuscript
and making a number of useful remarks.

\end{document}